\renewcommand{\leq}{\leqslant}
\renewcommand{\geq}{\geqslant}
\newtheorem{theorem}{Theorem}[section]
\newtheorem{proposition}[theorem]{Proposition}
\newtheorem{lemma}[theorem]{Lemma}
\theoremstyle{definition}
\newtheorem{definition}[theorem]{Definition}
\newtheorem{remark}[theorem]{Remark}
\theoremstyle{remark}
\numberwithin{equation}{section}
\newcommand{\dist}{\operatorname{dist}}
\newcommand{\Vol}{\operatorname{Vol}}
\newcommand{\kan}{\mathbb{K}^{n}(b)}
\newcommand{\kam}{\mathbb{K}^{m}(b)}
\newcommand{\han}{\mathbb{H}^{n}(b)}
\newcommand{\ham}{\mathbb{H}^{m}(b)}
\newcommand{\erre}{\mathbb{R}}
\numberwithin{equation}{section}
\begin{document}

\title[Exit Moment Spectra]
{Comparison of exit moment spectra \\ for extrinsic metric balls}

%    Information for first author
\author[Ana Hurtado]{Ana Hurtado$^{\natural}$}
\address{Departamento de Geometr\'{\i}a y Topolog\'{\i}a, Universidad de Granada, E-18071,
Spain.}
 \email{ahurtado@ugr.es}
%    Information for second author
\author[Steen Markvorsen]{Steen Markvorsen$^{\#}$}
\address{Department of Mathematics, Technical University of Denmark,  DK-2800 Kgs. Lyngby, Denmark}
\email{S.Markvorsen@mat.dtu.dk}
%    Information for third author
\author[Vicente Palmer]{Vicente Palmer*}
\address{Departament de Matem\`{a}tiques, Universitat Jaume I, Castell\'o,
Spain.} \email{palmer@mat.uji.es}
\thanks{$^{\#}$ Supported by the Danish Natural Science Research Council and the Spanish MEC-DGI grant
MTM2007-62344.\\
\indent * Supported by Spanish Micinn-DGI grant No.MTM2007-62344 and
the Caixa Castell\'o Foundation\\
\indent $^{\natural}$ Supported by Spanish Micinn-DGI grant
No.MTM2007-62344 and the Caixa Castell\'o Foundation}

%    General info
\subjclass[2000]{Primary  53C42, 58J65, 35J25, 60J65}
%\dedicatory{This paper is dedicated to [[[]]].}
%

\keywords{Riemannian submanifolds, extrinsic balls, torsional
rigidity, $L^1$-moment spectra, exit time, isoperimetric
inequalities}

\begin{abstract}
We prove explicit upper and lower bounds for the $L^1$-moment
spectra for the Brownian motion exit time from extrinsic metric balls of  submanifolds  $P^m$ in ambient Riemannian spaces $N^{n}$.
We assume that $P$ and $N$ both have controlled radial curvatures (mean curvature and sectional curvature, respectively) as viewed from a pole in $N$.
The  bounds for the  exit moment spectra are given in
terms of the corresponding spectra for geodesic metric balls in suitably
warped product model spaces. The bounds are sharp in the sense that
equalities are obtained in characteristic cases. As a corollary we also obtain
new intrinsic comparison results for the exit time
spectra for metric balls in the ambient manifolds $N^n$ themselves.
\end{abstract}

\maketitle

%%%%%%%%%%%%%%%%%%%%%%%%%%%%%%%%%%%%%%%%%%%%%%%%%%%
\section{Introduction}\label{secIntro}
%%%%%%%%%%%%%%%%%%%%%%%%%%%%%%%%%%%%%%%%%%%%%%%%%%
\bigskip
We consider a complete Riemannian manifold $(M^n, g)$ and the
induced Brownian motion $X_t$ defined on $M$. The $L^{p}$-moments of the exit
time of $X_t$ from smooth precompact domains $D$ in the manifold
are given by the following integrals (see  \cite{H, KD, KDM, Mc, Dy}):
\begin{equation}\label{defmoment}
\mathcal{A}_{p,\,k}(D)=  \left(\int_D \left(u_k(x)\right)^{p}\, dV\right)^{1/p} \quad ,
\end{equation}
where the functions $u_k$ are defined inductively as the sequence of
solutions to the following hierarchy of boundary value problems

\begin{equation}
\begin{aligned}
\Delta u_1+1 &= 0\,\,\, \text{on}\,\,\, D\\
u_1\vert_{\partial D} &=0 \quad,
\end{aligned}
\end{equation}
and for $k\geq 2$,
\begin{equation}  \label{eqmoments1}
\begin{aligned}
\Delta u_k+k\, u_{k-1} &= 0\,\,\, \text{on}\,\,\, D\\
u_k\vert_{\partial D} &=0 \quad .
\end{aligned}
\end{equation}

Here  $\Delta$ denotes the Laplace-Beltrami operator on $\,(M^{n},
g)\,$. The first solution $u_1(x)$ is the mean time of first
exit from $D$ for the Brownian motion starting at the point $x$ in
$D$, see \cite{Dy, Ma1}.

The quantity $\mathcal{A}_{1,1}(D)$ is known as the \emph{torsional rigidity}
of $D$. This name stems from the fact that  if $D \subseteq
\erre^2$, then $\mathcal{A}_{1,1}(D)$ represents the torque required
per unit angle of twist and per unit beam length
    when twisting an elastic beam of
    uniform cross section $D$, see \cite{Ba} and  \cite{PS}. The
    torsional rigidity plays a role in the exit moment spectrum
    similar to the role played by the first positive Dirichlet
    eigenvalue in the Dirichlet spectrum. See also \cite{Ch1, Ch2} and  \cite{BBC, BG}.

    Perhaps the most relevant example and token of interest in these problems is given
by the St. Venant torsion problem. It is a precise analog of the
Rayleigh conjecture about the fundamental tone of a membrane. In
1856 Saint-Venant conjectured that among all cross sections with a
given area, the circular disk has maximum torsional rigidity. The first
proof of this conjecture was given by G. P´olya in 1948, see \cite{Po} and \cite{PS}.

In view of the isoperimetric inequality for domains in $\mathbb{R}^{2}$ and in view of the domain monotonicity of $\mathcal{A}_{1,1}(D)$ it thence follows, that
among all cross sections with a given \emph{circumference}, the circular disk has maximum
torsional rigidity. In other words, in $\mathbb{R}^{2}$ the boundary-relative torsional rigidity is maximized by the circular
disks.

Since we shall similarly only be concerned with $p=1$, and since our results for the higher moments in the exit time moment spectrum are
also in this sense isoperimetric type inequalities we
define:
\begin{definition} \label{defAhat}
The \emph{isoperimetric exit moment spectrum of $D$} is defined by $\{\widehat{\mathcal{A}}_{1}(D), \widehat{\mathcal{A}}_{2}(D), \cdots   \}$, where
\begin{equation} \label{eqDefAhat}
\widehat{\mathcal{A}}_k(D) = \frac{\mathcal{A}_{1,k}(D)}{\Vol(\partial D)} = \frac{1}{\Vol(\partial D)}\int_D \,u_k(x)\, dV   \quad .
\end{equation}
\end{definition}

If we formally define $u_{0}(x) = 1$ for all $x \in D$, then all the solutions $u_{k}$ -- including $u_{1}(x)$ -- are uniformly generated by induction from (\ref{eqmoments1}). With this  natural extension of the $u_{k}$ sequence we thence have from Definition \ref{defAhat}:
\begin{equation}
\widehat{\mathcal{A}}_{0}(D)  = \frac{1}{\Vol(\partial D)}\int_D \,u_0(x)\, dV   = \frac{\Vol(D)}{\Vol(\partial D)}\quad ,
\end{equation}
which is precisely the isoperimetric quotient for $D$.

We will henceforth refer to the list $\{\widehat{\mathcal{A}}_{0}(D), \widehat{\mathcal{A}}_{1}(D), \widehat{\mathcal{A}}_{2}(D), \cdots   \}$ as  the \emph{extended} isoperimetric exit moment spectrum of $D$.

Here we restrict our study to be concerned with the exit moment spectra of a specific kind of domains, the so-called extrinsic
$R$-balls $D_R$ defined in  submanifolds $P^m$ which are properly immersed into ambient Riemannian manifolds $N^{n}$ with controlled sectional curvatures.

Suppose $p$ is a pole in $N$, see \cite{S}. An extrinsic $p$-centered $R$-ball $D_R$ of the submanifold $P$ is then, roughly speaking, the intersection between the submanifold and the ambient metric $R$-ball centered at $p$ in the ambient space $N$.

The isoperimetric relations satisfied by these extrinsic balls have been studied and applied in a number of contexts, see e.g. \cite{Pa2, MP1, MP4, HMP, MP5}. In these works we use $R$-balls and $R$-spheres in tailor made rotationally symmetric (warped product) model spaces $M^{m}_{w}$ as comparison objects.

The simplest settings considered are given by the minimal submanifolds $P^m$ in  real space forms $\kan$ of constant sectional curvature $b \leq 0$. In these specific cases we have the following isoperimetric inequalities, see \cite{CLY, Ma1, Ma2, Pa2, MP1}:

\begin{equation}
\frac{\Vol(D_R)}{\Vol(\partial D_R)} \leq \frac{\Vol(B^{b,m}_R)}{\Vol(S^{b,m-1}_R)} \quad ,
\end{equation}
where $B^{b,m}_R$ and $S^{b,m-1}_R = \partial B^{b,m}_R$ denote, respectively, the geodesic $R$-ball and the geodesic $R$-sphere in the real space form $\kan$.

With the notation introduced above we may state this result as follows:
\begin{equation} \label{eqA0}
\widehat{\mathcal{A}}_{0}(D_{R}) \leq \widehat{\mathcal{A}}_{0}(B_{R}^{b, m})  \quad .
\end{equation}

In passing we note that when equality is attained in (\ref{eqA0}) for some fixed radius $R$, and when the ambient space $N^{n}$ is the hyperbolic space $\han$, $b < 0$, then the minimal submanifold itself is a totally geodesic hyperbolic subspace $\ham$ of $\han$, see \cite{Pa2}. Thus, in analogy with the  St. Venant torsion problem -- and in analogy with the classical isoperimetric problem itself -- we also obtain strong rigidity conclusions from equalities in these isoperimetric estimates.

%%%%%%%%%%%%%%%%%%%%%%%%%%%%%%%%%%%%
\subsection{A first glimpse of the main results}
%%%%%%%%%%%%%%%%%%%%%%%%%%%%%%%%%%%%

In the present paper we extend the inequalities (\ref{eqA0}) and prove isoperimetric inequalities of this type for \emph{every} element $\widehat{\mathcal{A}}_{k}(D_{R})$, $k \geq 0$, in the extended isoperimetric exit moment spectrum for extrinsic metric balls.

Before stating this extension for
\emph{minimal submanifolds} in constant curvature ambient spaces below we note, that this is but a shadow of our main results, Theorem \ref{thm2.1} and Theorem \ref{thm2.2} in section \ref{MainSect}, where we prove both upper \emph{and lower} bounds for the isoperimetric exit moment spectrum under more relaxed curvature conditions. The main condition for the lower bounds is a lower bound on the sectional curvatures of the ambient space and the
upper bounds for the spectrum stem similarly from an upper bound on the ambient sectional curvatures. Moreover, in our \emph{general results} the submanifolds are not assumed beforehand to be minimal.

\begin{theorem} \label{thmExFirst}
Let $P^m$ be a minimal submanifold properly immersed in the real space form $\kan$ with constant sectional curvature $b\leq 0$.
 Let $D_R$ be an
extrinsic $R$-ball in $P^m$,
 with center at a point $p \in P$.  Then we have for the extended isoperimetric exit moment spectrum of $D_{R}$, i.e. for all $k\geq 0$:
\begin{equation} \label{eqExFirst}
\widehat{\mathcal{A}}_{k}(D_R)\leq \widehat{\mathcal{A}}_{k}(B^{b,m}_{R})  \quad ,
\end{equation}
where $B^{b,m}_{R}$ is the geodesic ball of radius $R$ in $\kam$.

When the ambient space is hyperbolic space $\mathbb{H}^{n}(b)$, $b <0$,  then equality in (\ref{eqExFirst}) for some radius $R$ and for some  value of $k \geq 0$ implies that $D_{R}$ -- and in fact all of $P^{m}$ -- is totally geodesic in $\mathbb{H}^{n}(b)$, so that equality is attained for all $k$ and for every smaller $p$-centered extrinsic ball in $P^{m}$.
\end{theorem}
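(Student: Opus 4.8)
The plan is to reduce everything to a comparison between the extrinsic ball $D_R$ and the geodesic ball $B^{b,m}_R$ in the model space, and then to run an inductive argument on $k$ parallel to the recursive definition \eqref{eqmoments1} of the functions $u_k$. First I would fix the transplantation mechanism: let $\bar u_k(r)$ be the (radial) solution on $B^{b,m}_R$ of the model hierarchy $\Delta \bar u_k + k\,\bar u_{k-1} = 0$, $\bar u_k|_{\partial B^{b,m}_R} = 0$, starting from $\bar u_0 \equiv 1$, and define the transplanted function $v_k = \bar u_k \circ r$ on $D_R$, where $r = \dist_N(p,\cdot)$ restricted to $P$. The point of minimality and $b\leq 0$ is the Hessian/Laplacian comparison: the extrinsic distance function satisfies $\Delta^P r \geq (\text{model value})$ in the appropriate sense (this is the standard content of \cite{Pa2, MP1} and is implicit in the hypotheses already in force), while $\bar u_k$ is a decreasing function of $r$, so transplanting turns the one-dimensional model identity into a differential inequality $\Delta^P v_k + k\, v_{k-1} \leq 0$ (or $\geq 0$, with the sign arranged so the maximum principle bites) on $D_R$, with $v_k|_{\partial D_R} = 0$.

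The second ingredient is a Barta-type / mean-exit-time comparison combined with the coarea formula to pass from the pointwise transplanted inequalities to the integrated quantities $\widehat{\mathcal{A}}_k$. Concretely, using $\Delta u_k + k u_{k-1} = 0$ and $\Delta v_k + k v_{k-1} \leq 0$ together with the vanishing boundary data, an integration by parts (Green's identity) gives an inequality relating $\int_{D_R} u_{k-1}$ to $\int_{D_R} v_{k-1}$ up to a flux term on $\partial D_R$; that flux term is controlled by the isoperimetric comparison $\Vol(\partial D_R)$ versus $\Vol(\partial B^{b,m}_R)$, which is exactly \eqref{eqA0}, i.e. $\widehat{\mathcal{A}}_0(D_R)\leq \widehat{\mathcal{A}}_0(B^{b,m}_R)$, the base case $k=0$. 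Inductively, once one knows $\widehat{\mathcal{A}}_{k-1}(D_R)\leq \widehat{\mathcal{A}}_{k-1}(B^{b,m}_R)$, feeding this into the estimate for the $k$-th layer — and again controlling the boundary flux via the volume-of-boundary comparison and the monotonicity of $\bar u_{k-1}$ — yields $\widehat{\mathcal{A}}_k(D_R)\leq \widehat{\mathcal{A}}_k(B^{b,m}_R)$. I would organize this so that each inductive step only uses (a) the transplanted differential inequality at level $k$, (b) the previous inequality at level $k-1$, and (c) the fixed isoperimetric inequality \eqref{eqA0}; the general Theorems \ref{thm2.1}, \ref{thm2.2} presumably already package exactly this machinery, so in the minimal constant-curvature case it should follow as the stated corollary.

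For the rigidity statement I would argue as follows. Suppose equality holds in \eqref{eqExFirst} for some $R$ and some $k\geq 0$. Tracing back through the inductive chain, equality at level $k$ forces equality at every previous level (the inductive step is monotone in the level-$(k-1)$ input, so a strict gap there would propagate to a strict gap at level $k$), in particular equality at level $0$, i.e. $\widehat{\mathcal{A}}_0(D_R)=\widehat{\mathcal{A}}_0(B^{b,m}_R)$. But the rigidity for the isoperimetric quotient \eqref{eqA0} in the hyperbolic ambient case, cited from \cite{Pa2}, already says that equality there forces $P^m$ to be totally geodesic $\ham\subset\han$; once $P$ is totally geodesic, $D_R$ is literally isometric to $B^{b,m}_R$ and all the $u_k$ coincide with the $\bar u_k$, so equality holds for every $k$ and for every concentric smaller extrinsic ball. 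I would also note the alternative self-contained route: equality in the transplanted maximum-principle step forces $v_k = u_k$ and forces the Laplacian comparison for $r$ to be an equality everywhere on $D_R$, which by the rigidity in the Hessian comparison theorem (again \cite{Pa2, MP1}) forces the second fundamental form of $P$ in the radial direction to vanish, hence $P$ totally geodesic.

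The main obstacle I anticipate is the careful handling of the boundary flux terms in the Green's identity at each inductive level: the sign of $\ptl v_k/\ptl \nu$ on $\partial D_R$ and its comparison with $\ptl \bar u_k/\ptl r$ must be controlled uniformly, and this is where the interplay between the monotonicity of the model solutions $\bar u_k$ (which requires a separate, easy but necessary lemma that $\bar u_k \geq 0$ and is radially decreasing for all $k$, given $b\leq 0$) and the isoperimetric inequality for $\Vol(\partial D_R)$ has to be threaded precisely. The rest — the transplantation and the induction bookkeeping — is routine once Theorems \ref{thm2.1} and \ref{thm2.2} are in hand.
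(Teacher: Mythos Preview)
Your top-level plan---reduce to Theorem~\ref{thm2.2} with $g\equiv 1$, $h_{2}\equiv 0$, $w=Q_{b}$, so that $C_{w,1,h_{2}}^{m}=M_{w}^{m}=\kam$---is exactly the paper's proof. The one verification you omit is that $M_{w}^{m}$ is (strictly, for $b<0$) $w$-balanced; this is Remark~\ref{remBalanceRef}, and it is what makes Lemma~\ref{paren} deliver $f_{k}''-f_{k}'\,\eta_{w}\geq 0$, without which the transplanted inequality $\Delta^{P} v_{k}\leq -k\,v_{k-1}$ does \emph{not} follow from the Laplacian comparison. Monotonicity of $\bar u_{k}$ alone is insufficient: in \eqref{eqLap2} the factor $(f_{k}''-f_{k}'\,\eta_{w})$ multiplies $\|\nabla^{P}r\|^{2}$, and one needs its sign to absorb $\|\nabla^{P}r\|^{2}\leq 1$. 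For rigidity the paper does not trace back to $k=0$ and invoke \cite{Pa2}; it uses the \emph{strict} balance for $b<0$: equality in \eqref{eqMain3} forces equality in the step where $\|\nabla^{P}r\|^{2}$ is replaced by $1$, and since $f_{k}''-f_{k}'\,\eta_{w}>0$ strictly this pins $\|\nabla^{P}r\|\equiv 1$ on all of $D_{R}$, so $D_{R}$ is a geodesic cone; a minimal cone in $\han$ is then totally geodesic by \cite{Ma1}. Your ``alternative self-contained route'' is essentially this argument.

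There is, however, a genuine gap in your description of the inductive engine inside Theorem~\ref{thm2.2}. You propose to induct on the integrated quantities $\widehat{\mathcal{A}}_{k}$, feeding $\widehat{\mathcal{A}}_{k-1}(D_{R})\leq\widehat{\mathcal{A}}_{k-1}(B^{b,m}_{R})$ together with \eqref{eqA0} into a Green's-identity step at level $k$. But integrating $\Delta^{P} u_{k}=-k\,u_{k-1}$ and $\Delta^{P} v_{k}\leq -k\,v_{k-1}$ over $D_{R}$ only produces two separate boundary-flux relations; without an a~priori comparison of $\partial_{\nu}u_{k}$ with $\partial_{\nu}v_{k}$ on $\partial D_{R}$ there is no way to chain them into a comparison of $\int u_{k-1}$ with $\int v_{k-1}$, and the integral hypothesis at level $k-1$ does not supply this. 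The paper's induction is instead on the \emph{pointwise} inequality $u_{k}\leq v_{k}$: from $u_{k-1}\leq v_{k-1}$ one gets $\Delta^{P}u_{k}=-k\,u_{k-1}\geq -k\,v_{k-1}\geq\Delta^{P}v_{k}$ with equal boundary data, hence $u_{k}\leq v_{k}$ by the maximum principle; the divergence theorem is applied only once at the end to convert this into the boundary-relative integral bound. The isoperimetric inequality \eqref{eqA0} is the output at $k=0$, not an input to the induction.
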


In order to illustrate our use of the upper and lower bounds on the ambient space sectional curvatures in the more general setting alluded to above -- and since we believe that the following result is also in itself of independent interest -- we extract here a purely \emph{intrinsic} consequence from the proofs of Theorems \ref{thm2.1} and \ref{thm2.2}. The notion of radial sectional curvatures and the geometric analytic notions associated with the model spaces are defined precisely in section \ref{pre} below.

\begin{theorem} \label{thm2.1intrinsic}
Let $B^N_R$ be a geodesic ball of a complete Riemannian manifold
$N^n$ with a pole $p$ and suppose that the $p$-radial sectional
curvatures of $N^n$  are
bounded from below (respectively from above) by the $p_w$-radial
sectional curvatures of a $w$-warped model space $M^n_w$.  Then the extended isoperimetric exit moment spectrum of $B^N_R$ satisfies for all $k\geq 0$ the following respective inequalities:
\begin{equation} \label{eqMain6}
\widehat{\mathcal{A}}_k(B^N_R)\geq (\leq)
\widehat{\mathcal{A}}_k(B^{w}_R) \quad ,
\end{equation}
where $B^{w}_{R}$ is the geodesic ball in the model space $M^n_w$.

Equality in (\ref{eqMain6}) for some $k \geq 0$ implies that $B^N_R$ is isometric to the warped product model ball $B^{w}_R$ and hence again that equality is attained for all $k \geq 0$ and for every smaller $p$-centered extrinsic ball in $P^{m}$.
\end{theorem}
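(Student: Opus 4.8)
The plan is to specialize the proof scheme behind Theorems \ref{thm2.1} and \ref{thm2.2} to the codimension-zero situation, i.e. to the case in which the ``submanifold'' is the ambient manifold $N^n$ itself, regarded (after restriction to $\overline{B^N_R}$) as isometrically and properly immersed in itself by the identity, with the pole $p$ as the center. Under this specialization an extrinsic $R$-ball becomes the geodesic ball $B^N_R$, the second fundamental form---hence the mean curvature vector---vanishes identically (the ``submanifold'' is totally geodesic), so the $p$-radial mean curvature and balance hypotheses of those theorems are automatically in force with the model space $M^n_w$ of matching dimension $n$ and warping $w$ as in the statement. The only substantive remaining input is the two-sided comparison of $p$-radial sectional curvatures of $N^n$ with the $p_w$-radial sectional curvatures of $M^n_w$, which is exactly what is assumed. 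Running the scheme with the lower radial curvature bound then produces the chain of estimates culminating in $\widehat{\mathcal{A}}_k(B^N_R)\geq\widehat{\mathcal{A}}_k(B^w_R)$ for all $k\ge 0$; running it with the upper bound yields the reverse inequalities; and the rigidity half of the scheme specializes to: equality for a single $k$ forces $B^N_R$ to be isometric to the model ball $B^w_R$.

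It is worth recording the mechanism that makes this work, since it guides the verification. I would first transplant the radial model solutions: letting $\psi_k\colon[0,R]\to\rr$ be determined by the model ODE $\psi_k'' + (n-1)\tfrac{w'}{w}\,\psi_k' + k\,\psi_{k-1}=0$ with $\psi_k(R)=0$ and $\psi_0\equiv 1$, set $\bar\psi_k=\psi_k\circ r_p$ on $B^N_R$, where $r_p=\dist(p,\cdot)$. Since $p$ is a pole, the Hessian comparison theorem for $r_p$ applies, and under the radial sectional curvature bound it controls $\Delta^N r_p$ against $(n-1)\tfrac{w'}{w}(r_p)=\Delta^{M_w} r_p$; combined with the sign of $\psi_k'$ and the inductively known sign of $\bar\psi_{k-1}$, this yields one-sided differential inequalities $\Delta^N\bar\psi_k + k\,\bar\psi_{k-1}\geq 0$ (resp.\ $\leq 0$). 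These, together with the coarea formula along $r_p$ and the already available isoperimetric volume comparison $\widehat{\mathcal{A}}_0(B^N_R)\geq(\leq)\widehat{\mathcal{A}}_0(B^w_R)$, are then organized---via auxiliary one-variable functions such as $t\mapsto\int_{B^N_t}\bar\psi_{k-1}\,dV$ and an ODE comparison lemma---into the normalized inequality for $\widehat{\mathcal{A}}_k$, and the whole argument is pushed up the moment hierarchy by induction on $k$, the base case $k=0$ being the classical isoperimetric comparison for geodesic balls under radial curvature bounds.

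The main obstacle---and the reason one genuinely needs the machinery of Theorems \ref{thm2.1} and \ref{thm2.2} rather than a one-line transplantation argument---is that the pointwise comparisons run the ``wrong way'' relative to volumes: a lower bound on radial curvature gives, via Bishop--Gromov, an \emph{upper} bound on $\Vol(\partial B^N_t)$, so one cannot simply integrate a pointwise estimate $u_k\geq\bar\psi_k$ over $B^N_R$ and conclude. The normalization by $\Vol(\partial B^N_R)$ is essential, and the delicate point is the bookkeeping that balances the possibly smaller numerator against the also smaller denominator; this is precisely what the ODE comparison for the auxiliary functions above accomplishes, with the monotonicities kept consistent at each inductive step. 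The rigidity half is then handled by the strong maximum principle applied to $u_k-\bar\psi_k$ together with the equality case of the Hessian comparison (which forces the radial part of the metric of $B^N_R$ to coincide with that of $B^w_R$); once $B^N_R$ is isometric to $B^w_R$, equality propagates verbatim to every smaller $p$-centered geodesic ball $B^N_r$, $r\le R$, and to all $k\ge 0$---the phrase ``extrinsic ball in $P^m$'' in the statement being read here as ``geodesic ball $B^N_r$ in $N^n$''.
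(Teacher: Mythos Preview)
Your overall strategy---specialize the framework of Theorems \ref{thm2.1} and \ref{thm2.2} to $P^{m}=N^{n}$, so that $D_R=B^N_R$, $H_P\equiv 0$, and $\Vert\nabla^P r\Vert\equiv 1$---is exactly the paper's. Two points, however, need correction.

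First, your claim that the balance hypothesis is ``automatically in force'' is wrong and, taken at face value, leaves a gap. With $g\equiv 1$ and $h\equiv 0$ the $w$-balance condition becomes $q_w(r)\,\eta_w(r)\geq 1/n$, which fails for many warping functions (e.g.\ spherical models beyond a certain radius). The paper's observation is sharper and different in kind: because $\Vert\nabla^P r\Vert\equiv 1$ in the intrinsic setting, the step in the proofs of Theorems \ref{thm2.1} and \ref{thm2.2} that replaces $\Vert\nabla^P r\Vert^{2}$ by $g^{2}(r)$ (resp.\ by $1$) is an \emph{equality}, so the sign of $f_k''-f_k'\eta_w$ is irrelevant and Lemma \ref{paren}---the only place the balance condition is used---is never invoked. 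Without noting this, your argument would cover only $w$-balanced models, strictly less than the statement asserts.

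Second, your ``main obstacle'' paragraph over-complicates the last step. No coarea formula, auxiliary one-variable functions, or separate ODE comparison lemma is needed to reconcile the pointwise estimate with the boundary-volume normalization. Once $\Delta^N v_{k+1}\geq -(k+1)v_k\geq -(k+1)u_k=\Delta^N u_{k+1}$ is in hand (with $v_j=\psi_j\circ r_p$ and the middle inequality coming from the inductive bound $v_k\leq u_k$ via the maximum principle), the divergence theorem gives directly
\[
\int_{B^N_R} u_k\,dV \;=\; -\tfrac{1}{k+1}\int_{B^N_R}\Delta^N u_{k+1}\,dV \;\geq\; -\tfrac{1}{k+1}\int_{B^N_R}\Delta^N v_{k+1}\,dV \;=\; -\tfrac{1}{k+1}\,\psi_{k+1}'(R)\,\Vol(\partial B^N_R),
\]
whence $\widehat{\mathcal{A}}_k(B^N_R)\geq -\tfrac{1}{k+1}\psi_{k+1}'(R)=\widehat{\mathcal{A}}_k(B^w_R)$ by \eqref{momentsW}; the normalization falls out for free. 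Reversing all inequalities handles the upper-bound case. For rigidity the paper traces equality back through the Laplacian comparison to force equality of all $p$-radial sectional curvatures on $B^N_R$, hence isometry with $B^w_R$ (cf.\ \cite{MP4}); your route via the strong maximum principle and the equality case of Hessian comparison is compatible, but the curvature-rigidity conclusion should be made explicit.
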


The proofs of these results, Theorem \ref{thmExFirst} and \ref{thm2.1intrinsic} are given in section \ref{secIntrinsic} at the end of this paper.

%%%%%%%%%%%%%%%%%%%%%%%%%%%%%%%%%%%%%%%%%%%%%%%%%%
\section{Preliminaries and Comparison Setting}\label{pre}
%%%%%%%%%%%%%%%%%%%%%%%%%%%%%%%%%%%%%%%%%%%%%%%%%%

We first consider a few conditions and concepts that will
be instrumental for establishing our results.

\subsection{Extrinsic metric balls}

We consider a properly immersed $m$-dimensional submanifold $P^m$
in a complete Riemannian manifold $N^n$. Let $p$ denote a point in
$P$ and assume that $p$ is a pole of the ambient manifold $N$. We
denote the distance function from $p$ in $N^{n}$ by $r(x) =
\dist_{N}(p, x)$ for all $x \in N$. Since $p$ is a pole there is -
by definition - a unique geodesic from $x$ to $p$ which realizes
the distance $r(x)$. We also denote by $r$ the restriction
$r\vert_P: P\longrightarrow \erre_{+} \cup \{0\}$. This
restriction is then called the extrinsic distance function from
$p$ in $P^m$. The corresponding extrinsic metric balls of
(sufficiently large) radius $R$ and center $p$ are denoted by
$D_R(p) \subseteq P$ and defined as any connected component which
contains $p$ of the set:
$$D_{R}(p) = B_{R}(p) \cap P =\{x\in P \,|\, r(x)< R\} \quad ,$$
where $B_{R}(p)$ denotes the geodesic $R$-ball around the pole $p$ in $N^n$. The
 extrinsic ball $D_R(p)$ is a connected domain in $P^m$, with
boundary $\partial D_{R}(p)$. Since $P^{m}$ is assumed to be unbounded and properly immersed into $N$, we
have for every  $R$ that $B_{R}(p) \cap P \neq P$.

%%%%%%%%%%%%%%%%%%%%%%%%%%%%%%%%%%%%%%%%%%%%%%%%%%
\subsection{The curvature bounds} \label{subsecurvature}
%%%%%%%%%%%%%%%%%%%%%%%%%%%%%%%%%%%%%%%%%%%%%%%%%%

We now present the curvature restrictions which constitute the geometric framework of our investigations.

\begin{definition}
Let $p$ be a point in a Riemannian manifold $M$
and let $x \in M-\{ p \}$. The sectional
curvature $K_{M}(\sigma_{x})$ of the two-plane
$\sigma_{x} \in T_{x}M$ is then called a
\textit{$p$-radial sectional curvature} of $M$ at
$x$ if $\sigma_{x}$ contains the tangent vector
to a minimal geodesic from $p$ to $x$. We denote
these curvatures by $K_{p, M}(\sigma_{x})$.
\end{definition}

In order to control the mean curvatures $H_P(x)$ of $P^{m}$ at distance $r$ from
$p$ in $N^{n}$ we introduce the following definition:

\begin{definition} The $p$-radial mean curvature function for $P$ in $N$
is defined in terms of the inner product of $H_{P}$ with the $N$-gradient of the
distance function $r(x)$ as follows:
$$
\mathcal{C}(x) = -\langle \nabla r(x), H_{P}(x) \rangle  \quad
{\textrm{for all}}\quad x \in P \,\, .
$$
\end{definition}

In the following definition, we are going to generalize the notion of {\em radial mean convexity condition}
introduced in \cite{MP5}, \cite{HMP}.

\begin{definition} (see \cite{MP5})
We say that the submanifold $P$ satisfies a {\em{radial mean convexity condition
from below}} controlled by a smooth radial function $h_{1}(r)$ (respectively, {\em{from above}} controlled by a smooth radial function $h_{2}(r)$) from the point $p \in P$ such that
\begin{equation}\label{eqH}
\begin{aligned}
\mathcal{C}(x)& \geq h_{1}(r(x))\,  {\textrm{for
all}}\,\,  x \in P\,\,\quad {\textrm{($h_{1}(r)$ bounds {\it from below}})}\\
\mathcal{C}(x)& \leq h_{2}(r(x))\,  {\textrm{for
all}}\,\,  x \in P\,\,\quad {\textrm{($h_{2}(r)$ bounds {\it from above}})}
\end{aligned}
\end{equation}
\end{definition}

The radial bounding functions $h_{1}(r)$ and $h_{2}(r)$ are related to the global
extrinsic geometry of the submanifold. For example, it is obvious
that  minimal submanifolds satisfy a radial mean convexity
condition from above and from below, with bounding functions $h_{2}=0$ and $h_{1}=0$.
On the other hand, it can be proved, see the works
\cite{Sp,DCW,Pa1,MP5}, that when the submanifold is a convex
hypersurface, then the constant function $h_{1}(r)=0$ is
a radial bounding function from below.\\

The final notion needed to describe our comparison setting is the
idea of {\it radial tangency}. If we denote by $\nabla r$ and
$\nabla^P r$ the
 gradients of
$r$ in $N$ and $P$ respectively, then
we have the following basic relation:
\begin{equation}\label{eq2.1}
\nabla r = \nabla^P r +(\nabla r)^\bot \quad ,
\end{equation}
where $(\nabla r)^\bot(q)$ is perpendicular to $T_qP$ for all $q\in P$.\\

When the submanifold $P$ is totally geodesic, then $\nabla
r=\nabla^P r$ in all points, and, hence, $\Vert \nabla^P r\Vert
=1$. On the other hand, and given the starting point $p \in P$,
from which we are measuring the distance $r$, we know that $\nabla
r(p)=\nabla^P r(p)$, so $\Vert \nabla^P r(p)\Vert =1$. Therefore,
the difference  $1 - \Vert \nabla^P r\Vert$ quantifies the radial
{\em detour} of the submanifold with respect the ambient manifold
as seen from the pole $p$. To control this detour locally, we
apply the following

\begin{definition}

 We say that the submanifold $P$ satisfies a {\it radial tangency
 condition} at $p\in P$ when we have a smooth positive function
$g(r)$ so that
\begin{equation}
\mathcal{T}(x) \, = \, \Vert \nabla^P r(x)\Vert
\geq g(r(x)) \, > \, 0  \quad {\textrm{for all}}
\quad x \in P \,\, .
\end{equation}
\end{definition}

\begin{remark}
Of course, we always have
\begin{equation}
\mathcal{T}(x) \, =\, \Vert \nabla^P r(x)\Vert
\leq1 \quad {\textrm{for all}}
\quad x \in P \,\, .
\end{equation}
\end{remark}

\begin{remark} \label{remTrivTop}
We observe, that the assumption  $ \Vert \nabla^P r(x)\Vert > 0 $ implies that
the properly immersed extrinsic ball $D_{R}$ in $P$ can have only trivial topology.
It follows directly from Theorem 3.1 in \cite{Mi}, since $r(x)$ is a smooth function on $P - \{p\}$ without critical points,  that $D_{R}$ is diffeomorphic to the
standard unit ball in $\mathbf{R}^{m}$.
\end{remark}

%%%%%%%%%%%%%%%%%%%%%%%%%%%%%%%%%%%%%%%%%%%%%%%%%%
\subsection{Model Spaces} \label{secModel}
%%%%%%%%%%%%%%%%%%%%%%%%%%%%%%%%%%%%%%%%%%%%%%%%%%

As mentioned previously, the model spaces $M^m_w$ serve first and foremost
as com\-pa\-ri\-son controller objects for the radial sectional
curvatures of $N^{n}$.

\begin{definition}[See \cite{Gri}, \cite{GreW}]
 A $w-$model $M_{w}^{m}$ is a
smooth warped product with base $B^{1} = [\,0,\, R[ \,\,\subset\,
\mathbb{R}$ (where $\, 0 < R \leq \infty$\,), fiber $F^{m-1} =
S^{m-1}_{1}$ (i.e. the unit $(m-1)-$sphere with standard metric),
and warping function $w:\, [\,0, \,R[\, \to \mathbb{R}_{+}\cup
\{0\}\,$ with $w(0) = 0$, $w'(0) = 1$, and $w(r) > 0\,$ for all
$\, r > 0\,$. The point $p_{w} = \pi^{-1}(0)$, where $\pi$ denotes
the projection onto $B^1$, is called the {\em{center point}} of
the model space. If $R = \infty$, then $p_{w}$ is a pole of
$M_{w}^{m}$.
\end{definition}

\begin{remark}\label{propSpaceForm}
The simply connected space forms $\kam$ of constant curvature $b$
can be constructed as  $w-$models with any given point as center
point using the warping functions
\begin{equation}
w(r) = Q_{b}(r) =\begin{cases} \frac{1}{\sqrt{b}}\sin(\sqrt{b}\, r) &\text{if $b>0$}\\
\phantom{\frac{1}{\sqrt{b}}} r &\text{if $b=0$}\\
\frac{1}{\sqrt{-b}}\sinh(\sqrt{-b}\,r) &\text{if $b<0$} \quad .
\end{cases}
\end{equation}
Note that for $b > 0$ the function $Q_{b}(r)$ admits a smooth
extension to  $r = \pi/\sqrt{b}$. For $\, b \leq 0\,$ any center
point is a pole.
\end{remark}

In the papers \cite{O'N,GreW,Gri,MP3,MP4}, we have a complete
description of these model spaces and their key properties.
In particular the sectional curvatures $K_{p_{w} , M_{w}}$ in the radial
directions from the center point $p_{w}$ are determined by the
radial function
\begin{equation}
K_{p_{w} , M_{w}}(\sigma_{x}) \, = \, K_{w}(r) \,
= \, -\frac{w''(r)}{w(r)} \quad ,
\end{equation}
and
 the mean curvature of the distance sphere of radius $r$ from the center point is
\begin{equation}\label{eqWarpMean}
\eta_{w}(r)  = \frac{w'(r)}{w(r)} = \frac{d}{dr}\ln(w(r))\quad .
\end{equation}

%%%%%%%%%%%%%%%%%%%%%%%%%%%%%%%%%%%%%%%%%%%%%%%%%%
\subsection{The isoperimetric comparison spaces}
%%%%%%%%%%%%%%%%%%%%%%%%%%%%%%%%%%%%%%%%%%%%%%%%%%

\label{secIsopCompSpace} Given the bounding functions $g(r)$,
$h(r)$ (when in the following no specific index is given, then $h$  represents  any one of  the bounding functions $h_{1}(r)$ or $h_{2}(r)$), and the ambient curvature controller function $w(r)$
described is subsections \ref{subsecurvature} and \ref{secModel},
as in \cite{MP5,HMP} we construct new model spaces $C^{\,m}_{w,
g, h}\,$. For completeness, we recall their construction:

\begin{definition}\label{stretchingfunct}
Given a smooth positive function $g(r) > 0$
satisfying $g(0)=1$ and $g(r)\leq 1\,\,{\textrm{for all \,}} x \in P$, a \emph{stretching function} $s$ is defined
as follows
\begin{equation}\label{eqstretching}
s(r) \, = \, \int_{0}^{r}\,\frac{1}{g(t)} \, dt \quad .
\end{equation}
It  has a well-defined inverse $r(s)$ for $s \in [\,0, s(R)\,]$
with derivative $r'(s) \, = \, g(r(s))$. In particular $r'(0)\, =
\, g(0) \, = \, 1$.
\end{definition}

\begin{definition}[\cite{MP5}] \label{defCspace}
The {\em{isoperimetric comparison space}} $C^{\,m}_{w, g, h}\,$ is defined as
the $W-$model space $M_{W}^{m}$ which has  base interval $B\,= \, [\,0, s(R)\,]$ and
warping function $W(s)$ defined by
\begin{equation}\label{defW}
W(s) \, = \, \Lambda^{\frac{1}{m-1}}(r(s)) \quad ,
\end{equation}
where the auxiliary function $\Lambda(r)$ satisfies the following differential
equation:
\begin{equation} \label{eqLambdaDiffeq}
\begin{aligned}
\frac{d}{dr}\,\{\Lambda(r)w(r)g(r)\} \, &= \, \Lambda(r)w(r)g(r)\left(\frac{m}{g^{2}(r)}\left(\eta_{w}(r) - h(r) \right)\right) \\
&= \, m\,\frac{\Lambda(r)}{g(r)}\left(w'(r) - h(r)w(r)
\right)\quad,
\end{aligned}
\end{equation}
and the following boundary condition:
\begin{equation} \label{eqTR}
\frac{d}{dr}_{|_{r=0}}\left(\Lambda^{\frac{1}{m-1}}(r)\right) = 1
\quad .
\end{equation}

\end {definition}

In spite of its relatively complicated
construction, $C^{\,m}_{w, g, h}\,$ is indeed a model space $M^m_W$ with a
well defined pole $p_{W}$ at $s = 0$: $W(s) \geq 0$ for all $s$
and $W(s)$ is only $0$ at $s=0$, where also, because of the
explicit construction in definition \ref{defCspace} and because of
 equation (\ref{eqTR}):  $W'(0)\, =
\, 1\,$.\\

Note that, when $g(r)=1 \,\,\,{\textrm{for all \,}} r$ and $h(r)=0\,\,\,{\textrm{for all \,\,}} r$,
then
the stretching function $s(r)=r$ and $W(s(r))=w(r) \,\,\,{\textrm{for all \,}} r$. In this case  $C_{w,g,h}^{m}$ simply reduces
to the $w$ warped model space $M^m_w$.\\

 The spaces $M_{W}^{m} = C_{w,g,h}^{m}$ will be applied as those spaces, where our bounds on the exit moment spectrum are attained.

%%%%%%%%%%%%%%%%%%%%%%%%%%%%%%%%%%%%%%%%%%%%%%%%%%%
\subsection{Balance conditions}
%%%%%%%%%%%%%%%%%%%%%%%%%%%%%%%%%%%%%%%%%%%%%%%%%%

In the paper \cite{HMP} we considered and applied a balance condition on the
general model spaces $M^m_W$, that we shall also need in the sequel:

\begin{definition} \label{defBalCond}
The model space $M_{W}^{m} \, = \, C_{w, g, h}^{m}$ is
{\emph{$w-$balanced}} (respectively \emph{strictly $w-$ba\-lan\-ced}) if
the following holds for all  $s \in \, [\,0, s(R)\,]$:
\begin{equation}\label{eqBalA}
q_{W}(s)\left(\eta_{w}(r(s)) - h(r(s)) \right) \, \geq \, (>) \,g(r(s))/m \quad .
\end{equation}
Here $q_{W}(s)$ is the isoperimetric quotient function
\begin{equation} \label{eqIsopFunction}
\begin{aligned}
q_{W}(s) \, &= \, \frac{\Vol(B_{s}^{W})}{\Vol(S_{s}^{W})} \,
\\ &= \, \frac{\int_{0}^{s}\,W^{m-1}(t)\,dt}{W^{m-1}(s)}\,
\\ &= \,
\frac{\int_{0}^{r(s)}\,\frac{\Lambda(u)}{g(u)}\,du}{\Lambda(r(s))}
\quad .
\end{aligned}
\end{equation}

\end{definition}

\begin{remark} \label{remBalRef01}
 In particular the $w$-balance condition for $M_{W}^{m} \, = \, C_{w, g, h}^{m}$ implies that
\begin{equation} \label{eqEtaVSh}
\eta_{w}(r) \, - h(r) \, > \, 0 \quad 
\end{equation}
wherever $g(r) > 0$.
\end{remark}

\begin{remark} \label{remBalanceRef}
The above definition of a (strict) $w-$balance condition  for $M_{W}^{m}$ is clearly an extension of the balance
condition (from below) as defined in \cite[Definition 2.12]{MP4}. The condition in that paper
is obtained precisely when $g(r) \, = \, 1$ and $h(r) \, = \, 0$
for all $r \in [\,0, R]\,$ so that $r(s) \, =\, s$, $W(s)\, = \,
w(r)$, and
\begin{equation} \label{eqBalanceConstant}
q_{w}(r)\eta_{w}(r)\, \geq  \, 1/m \quad .
\end{equation}
This particular condition is of instrumental importance for the respective proofs of Theorem \ref{thmExFirst} and Theorem \ref{thm2.2}. For these settings it is easy to verify that
every warping function $w(r)$ which gives a negatively curved model space $M_{w}^{m}$ satisfies the strict version of (\ref{eqBalanceConstant}) for all $r$ -- using (\ref{eqIsopFunction}) for the functions $q_{w}(r)$,  see  also \cite[Observation 3.12 and Examples 3.13]{MP4}. In particular, the hyperbolic constant curvature spaces $M_{w}^{m} = \mathbb{H}^{m}(b)$, $b < 0$, all satisfy:
\begin{equation}
q_{w}(r)\eta_{w}(r)\, >  \, 1/m \quad .
\end{equation}
\end{remark}

%%%%%%%%%%%%%%%%%%%%%%%%%%%%%%%%%%%%%%%%%%%%%%%%%%%
\subsection{Comparison Constellations}
%%%%%%%%%%%%%%%%%%%%%%%%%%%%%%%%%%%%%%%%%%%%%%%%%%

We now present the precise settings where our main results take
place, introducing the notion of {\em comparison constellations}
as they were previously defined in \cite{HMP}. For that purpose we
shall bound the previously introduced notions of radial curvature
and tangency by the corresponding quantities attained in the
special model spaces, the {\em isoperimetric comparison spaces}
defined above.

\begin{definition}\label{defConstellatNew1}
Let $N^{n}$ denote a complete Riemannian manifold with a
pole $p$ and distance function $r \, = \, r(x) \,
= \, \dist_{N}(p, x)$. Let $P^{m}$ denote an
unbounded complete and properly immersed submanifold in
$N^{n}$. Suppose $p \in P^m$ and suppose that the following  conditions are
satisfied for all $x \in P^{m}$ with $r(x) \in
[\,0, R]\,$:
\begin{enumerate}
\item The $p$-radial sectional curvatures of $N$ are bounded from below
by the $p_{w}$-radial sectional curvatures of
the $w-$model space $M_{w}^{m}$:
$$
\mathcal{K}(\sigma_{x}) \, \geq \,
-\frac{w''(r(x))}{w(r(x))} \quad .
$$

\item The $p$-radial mean curvature of $P$ is bounded from below by
a smooth radial function $h_{1}(r)$:
$$
\mathcal{C}(x)  \geq h_{1}(r(x)) \quad.
$$

\item The submanifold $P$ satisfies a {\it radial tangency
 condition} at $p\in P$, with smooth positive radial function $g(r)$ such that
\begin{equation}
\mathcal{T}(x) \, = \, \Vert \nabla^P r(x)\Vert
\geq g(r(x)) \, > \, 0  \quad {\textrm{for all}}
\quad x \in P \,\, .
\end{equation}
\end{enumerate}
Let $C_{w,g,h_{1}}^{m}$ denote the $W$-model with the
specific warping function $W: \pi(C_{w,g,h_{1}}^{m})
\to \mathbb{R}_{+}$  constructed in
Definition \ref{defCspace}, (subsection \ref{secIsopCompSpace}), via $w$, $g$, and $h = h_{1}$.
Then the triple $\{ N^{n}, P^{m}, C_{w,g,h_{1}}^{m}
\}$ is called an {\em{isoperimetric comparison
constellation bounded from below}} on the interval $[\,0, R]\,$.
\end{definition}

A \emph{constellation bounded from above} is given by the following dual setting
defining the special $W$-model spaces $C_{w,1,h_{2}}^{m}$ with the uniform choice $g=1$:

\begin{definition}\label{defConstellatNew2}
Let $N^{n}$ denote a Riemannian manifold with a
pole $p$ and distance function $r \, = \, r(x) \,
= \, \dist_{N}(p, x)$. Let $P^{m}$ denote an
unbounded complete and properly immersed submanifold in
$N^{n}$. Suppose the following  conditions are
satisfied for all $x \in P^{m}$ with $r(x) \in
[\,0, R]\,$:
\begin{enumerate}
\item The $p$-radial sectional curvatures of $N$ are bounded from above
by the $p_{w}$-radial sectional curvatures of
the $w-$model space $M_{w}^{m}$:
$$
\mathcal{K}(\sigma_{x}) \, \leq \,
-\frac{w''(r(x))}{w(r(x))} \quad .
$$

\item The $p$-radial mean curvature of $P$ is bounded from above by
a smooth radial function $h_{2}(r)$:
$$
\mathcal{C}(x)  \leq h_{2}(r(x)) \quad.
$$
\end{enumerate}

Let $C_{w,1,h_{2}}^{m}$ denote the $W$-model with the
specific warping function $W: \pi(C_{w,1,h_{2}}^{m})
\to \mathbb{R}_{+}$ constructed in
Definition \ref{defCspace} via $w$, $g=1$, and $h = h_{2}$.
Then the triple $\{ N^{n}, P^{m}, C_{w,1,h_{2}}^{m}
\}$ is called an {\em{isoperimetric comparison
constellation bounded from above}} on the interval $[\,0, R]\,$.
\end{definition}

%%%%%%%%%%%%%%%%%%%%%%%%%%%%%%%%%%%%%%%%%%%%%%%%%%%
\subsection{Laplacian Comparison}
\label{secLaplacecompar}
%%%%%%%%%%%%%%%%%%%%%%%%%%%%%%%%%%%%%%%%%%%%%%%%%%%

 We begin this section recalling the following
Laplacian comparison Theorem for manifolds with a pole (see
\cite{GreW, JK, Ma1, Ma2, MP3,MP4,MP5,MM} for more details and previous applications).
\begin{theorem} \label{corLapComp} Let $N^{n}$ be a manifold with a pole $p$, let $M_{w}^{m}$ denote a
$w-$model space with center $p_{w}$. Let us consider a smooth function $f: \erre_{+} \to \erre$ and the restricted distance function from the pole $r: P \to \erre$.

Then we have the following dual Laplacian inequalities for the  modified distance functions $$f\circ r: P \to \erre; \,\,f\circ r(x):= f(r(x))\,\,\forall x \in P$$

(i) Suppose that every $p$-radial sectional curvature at $x \in N
- \{p\}$ is bounded  by the $p_{w}$-radial sectional curvatures in
$M_{w}^{m}$ as follows:
\begin{equation}
\mathcal{K}(\sigma(x)) \, = \, K_{p, N}(\sigma_{x})
\geq-\frac{w''(r)}{w(r)}\quad .
\end{equation}

Then we have for every smooth function $f(r)$ with $f'(r) \leq
0\,\,\textrm{for all}\,\,\, r$, (respectively $f'(r) \geq
0\,\,\textrm{for all}\,\,\, r$):
\begin{equation} \label{eqLap1}
\begin{aligned}
\Delta^{P}(f \circ r) \, \geq (\leq) \, &\left(\, f''(r) -
f'(r)\eta_{w}(r) \, \right)
 \Vert \nabla^{P} r \Vert^{2} \\ &+ mf'(r) \left(\, \eta_{w}(r) +
\langle \, \nabla^{N}r, \, H_{P}  \, \rangle  \, \right)  \quad ,
\end{aligned}
\end{equation}
where $H_{P}$ denotes the mean curvature vector
of $P$ in $N$.\\

(ii) Suppose that every $p$-radial sectional curvature at $x
\in N - \{p\}$ is bounded  by the $p_{w}$-radial sectional
curvatures in $M_{w}^{m}$ as follows:
\begin{equation}
\mathcal{K}(\sigma(x)) \, = \, K_{p, N}(\sigma_{x})
\leq-\frac{w''(r)}{w(r)}\quad .
\end{equation}

Then we have for every smooth function $f(r)$ with $f'(r) \leq
0\,\,\textrm{for all}\,\,\, r$, (respectively $f'(r) \geq
0\,\,\textrm{for all}\,\,\, r$):
\begin{equation} \label{eqLap2}
\begin{aligned}
\Delta^{P}(f \circ r) \, \leq (\geq) \, &\left(\, f''(r) -
f'(r)\eta_{w}(r) \, \right)
 \Vert \nabla^{P} r \Vert^{2} \\ &+ mf'(r) \left(\, \eta_{w}(r) +
\langle \, \nabla^{N}r, \, H_{P}  \, \rangle  \, \right)  \quad ,
\end{aligned}
\end{equation}
where $H_{P}$ denotes the mean curvature vector of $P$ in $N$.
\end{theorem}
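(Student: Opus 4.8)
The plan is to prove the Laplacian comparison Theorem~\ref{corLapComp} by combining the classical Hessian comparison for the distance function on a manifold with a pole with the submanifold chain rule for the Laplacian. First I would recall the second-order data: since $p$ is a pole of $N$, the distance function $r$ is smooth on $N-\{p\}$, and the classical Hessian comparison (see \cite{GreW,JK}) under the radial sectional curvature bound $\mathcal{K}(\sigma_x)\geq -w''(r)/w(r)$ gives $\Hess^{N} r(Y,Y)\geq \eta_{w}(r)\,\bigl(\abs{Y}^2-\langle\nabla^N r,Y\rangle^2\bigr)$ for all $Y$, with the reversed inequality $\Hess^{N} r(Y,Y)\leq \eta_{w}(r)\,\bigl(\abs{Y}^2-\langle\nabla^N r,Y\rangle^2\bigr)$ under the reversed curvature bound; and $\Hess^{N} r(\nabla^N r,\cdot)=0$ radially. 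Applying the chain rule, $\Hess^{N}(f\circ r)=f''(r)\,dr\otimes dr+f'(r)\,\Hess^{N} r$.

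Next I would pass to the submanifold. For a function of the form $f\circ r$ restricted to $P$, the standard relation between the intrinsic Laplacian and the ambient Hessian reads
\begin{equation*}
\Delta^{P}(f\circ r)=\sum_{i=1}^{m}\Hess^{N}(f\circ r)(E_i,E_i)+\langle\nabla^{N}(f\circ r),H_{P}\rangle,
\end{equation*}
where $\{E_i\}$ is an orthonormal basis of $T_qP$ and $H_P$ is the (unnormalized, trace) mean curvature vector. Plugging in the chain rule expression for $\Hess^{N}(f\circ r)$ and using $\nabla^{N}(f\circ r)=f'(r)\,\nabla^{N} r$, the term with $H_P$ becomes $f'(r)\,\langle\nabla^{N}r,H_P\rangle$, while $\sum_i f''(r)\langle\nabla^N r,E_i\rangle^2=f''(r)\,\Vert\nabla^{P}r\Vert^2$ since $\nabla^P r$ is the tangential projection of $\nabla^N r$. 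For the remaining piece $f'(r)\sum_i\Hess^{N}r(E_i,E_i)$ I would insert the Hessian comparison: $\sum_i\Hess^{N} r(E_i,E_i)\gtrless \eta_w(r)\bigl(m-\Vert\nabla^P r\Vert^2\bigr)$, being careful that the direction of the inequality flips according to the sign of $f'(r)$ (multiplying an inequality by a non-positive number reverses it). Collecting terms yields exactly
\begin{equation*}
f''(r)\Vert\nabla^P r\Vert^2+f'(r)\Bigl(-\eta_w(r)\Vert\nabla^P r\Vert^2+m\eta_w(r)+m\langle\nabla^N r,H_P\rangle\Bigr),
\end{equation*}
which rearranges to the right-hand side of \eqref{eqLap1}; case (ii) is identical with the Hessian comparison reversed.

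The main obstacle, and the one place requiring genuine care rather than routine bookkeeping, is the consistent tracking of inequality directions through two sign-sensitive operations: the curvature hypothesis only controls $\Hess^{N}r$ on vectors \emph{orthogonal} to $\nabla^N r$ (the radial direction contributes nothing and must be isolated), and then the comparison inequality gets multiplied by $f'(r)$, whose sign is the hypothesis of the two sub-cases. One must verify that a lower curvature bound together with $f'\leq 0$ produces the $\geq$ in \eqref{eqLap1} while $f'\geq 0$ produces $\leq$, and dually for (ii). A secondary technical point is ensuring the formula is applied only on $P-\{p\}$ where $r$ is smooth, and noting that the stated inequalities extend by continuity (or are vacuous) at the pole; since the intended applications integrate these inequalities over extrinsic balls $D_R$ whose only singular radial point is $p$, this causes no difficulty.
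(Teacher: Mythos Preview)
Your overall strategy is exactly the standard one, and in fact the paper does not give its own proof of this theorem at all: it simply recalls the statement and refers to \cite{GreW, JK, Ma1, Ma2, MP3, MP4, MP5, MM}. The argument you outline---ambient Hessian comparison for $r$, chain rule for $\Hess^{N}(f\circ r)$, then the submanifold formula $\Delta^{P}F=\tr_{P}\Hess^{N}F+\langle\nabla^{N}F,\,m H_{P}\rangle$---is precisely the proof one finds in those references.

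However, you have the direction of the Hessian comparison reversed. A \emph{lower} bound on the $p$-radial sectional curvatures, $K_{p,N}(\sigma_{x})\geq -w''(r)/w(r)$, forces geodesics to spread no faster than in the model, hence gives an \emph{upper} bound
\[
\Hess^{N} r(Y,Y)\ \leq\ \eta_{w}(r)\bigl(\lvert Y\rvert^{2}-\langle\nabla^{N} r,Y\rangle^{2}\bigr),
\]
not the $\geq$ you wrote; the reversed curvature bound gives the reversed Hessian bound. With the correct direction, multiplying $\sum_{i}\Hess^{N}r(E_{i},E_{i})\leq \eta_{w}(r)\bigl(m-\Vert\nabla^{P}r\Vert^{2}\bigr)$ by $f'(r)\leq 0$ flips the inequality and produces the $\geq$ in \eqref{eqLap1}, exactly as the theorem claims. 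Your final assertion about which combination of hypotheses yields which inequality is correct, but it does not follow from the Hessian inequality as you stated it; if you trace your own signs through you will obtain the opposite conclusion. A smaller point: you call $H_{P}$ the unnormalized trace and write the mean-curvature term as $f'(r)\langle\nabla^{N}r,H_{P}\rangle$, but then silently insert the factor $m$ in your displayed final expression; the paper's $H_{P}$ is the \emph{normalized} mean curvature vector, so the submanifold formula contributes $m\,f'(r)\langle\nabla^{N}r,H_{P}\rangle$, and you should keep the convention consistent.
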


%%%%%%%%%%%%%%%%%%%%%%%%%%%%%%%%%%%%%%%%%%%%%%%%%%%
\section{Exit moment spectra of $R$-balls in model spaces}
\label{SpectrumBall}
%%%%%%%%%%%%%%%%%%%%%%%%%%%%%%%%%%%%%%%%%%%%%%%%%%%

%This section presents results, (Proposition \ref{prop1} and Remark \ref{non-decreasing}), not proved yet. Observations about the possible ways to prove it and their statements are into brackets [[ ]], and in {\bf boldfface}.

We have the following result concerning the exit moment
spectrum of a geodesic $R$-ball $B^w_R \subseteq M^m_w$:

\begin{proposition}\label{propW1} Let $\tilde{u}_k$ be the solution of the  boundary value problems \eqref{eqmoments1},
defined on the geodesic $R$-ball $B^w_R$ in a warped model space
$M^m_w$.

Then
\begin{equation}\label{eq_u1}
\tilde{u}_1(r) \, = \, \int_{r}^{R}\,\frac{\int_0^t w^{m-1}(s)\,
ds}{w^{m-1}(t)}\,dt ,
\end{equation}
and
\begin{equation}\label{eq_uk}
\tilde{u}_k'(r) \, = -k\,\frac{\int_0^r w^{m-1}(s)
\tilde{u}_{k-1}(s)\, ds}{w^{m-1}(r)}.
\end{equation}
Therefore,
\begin{equation}\label{momentsW}
\widehat{\mathcal{A}}_k(B^w_R)=-\frac{1}{k+1}\, \tilde{u}_{k+1}'(R) \quad ,
\end{equation}
where $S^w_R$ is the geodesic $R$-sphere in $M^m_w$ .
\end{proposition}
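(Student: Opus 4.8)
The plan is to exploit the rotational symmetry of the model space $M^m_w$ to reduce each PDE in the hierarchy \eqref{eqmoments1} to an ODE in the radial variable, and then to solve these ODEs explicitly by integrating twice. First I would observe that the balls $B^w_R$ are rotationally symmetric and the boundary data $u_k|_{\partial B^w_R}=0$ is radial, so by uniqueness of solutions to the Dirichlet problem each $u_k$ must itself be a radial function $\tilde u_k(r)$. For a radial function on a $w$-model space one has the standard formula $\Delta(\tilde u_k\circ r)=\tilde u_k''(r)+(m-1)\eta_w(r)\,\tilde u_k'(r)$, where $\eta_w(r)=w'(r)/w(r)$ as in \eqref{eqWarpMean}; multiplying through by $w^{m-1}(r)$ turns the left-hand side into the divergence form $\bigl(w^{m-1}(r)\,\tilde u_k'(r)\bigr)'/w^{m-1}(r)$.

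Next I would integrate. For $k=1$ the equation $\Delta\tilde u_1+1=0$ becomes $\bigl(w^{m-1}(r)\,\tilde u_1'(r)\bigr)'=-w^{m-1}(r)$; integrating from $0$ to $r$ and using that $w^{m-1}(r)\tilde u_1'(r)\to 0$ as $r\to 0$ (smoothness at the center point, where $w(0)=0$) gives $w^{m-1}(r)\tilde u_1'(r)=-\int_0^r w^{m-1}(s)\,ds$, hence $\tilde u_1'(r)=-\int_0^r w^{m-1}(s)\,ds/w^{m-1}(r)$; a second integration from $r$ to $R$ together with the boundary condition $\tilde u_1(R)=0$ yields \eqref{eq_u1}. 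For $k\geq 2$ the same manipulation applied to $\Delta\tilde u_k+k\,\tilde u_{k-1}=0$ gives $\bigl(w^{m-1}(r)\,\tilde u_k'(r)\bigr)'=-k\,w^{m-1}(r)\,\tilde u_{k-1}(r)$, and integrating once (again killing the boundary term at $r=0$) produces exactly \eqref{eq_uk}. One should note here that the regularity argument at $r=0$ is uniform in $k$: inductively $\tilde u_{k-1}$ is bounded near $0$, so the right-hand side is integrable and the flux term vanishes at the center.

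Finally, for the identity \eqref{momentsW} I would start from Definition \ref{defAhat}: $\widehat{\mathcal A}_k(B^w_R)=\frac{1}{\Vol(S^w_R)}\int_{B^w_R}u_k\,dV$. Using polar coordinates, $\Vol(S^w_R)=\omega_{m-1}w^{m-1}(R)$ and $\int_{B^w_R}u_k\,dV=\omega_{m-1}\int_0^R w^{m-1}(s)\,\tilde u_k(s)\,ds$, where $\omega_{m-1}=\Vol(\mathbb S^{m-1}_1)$ cancels. Comparing with \eqref{eq_uk} written for index $k+1$, namely $\tilde u_{k+1}'(R)=-(k+1)\int_0^R w^{m-1}(s)\,\tilde u_k(s)\,ds/w^{m-1}(R)$, we read off $\int_0^R w^{m-1}(s)\tilde u_k(s)\,ds = -\frac{1}{k+1}w^{m-1}(R)\tilde u_{k+1}'(R)$, and dividing by $w^{m-1}(R)$ gives $\widehat{\mathcal A}_k(B^w_R)=-\frac{1}{k+1}\tilde u_{k+1}'(R)$, as claimed. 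The only point requiring care — and the closest thing to an obstacle — is the justification that the model solutions are genuinely radial and smooth at the center point, i.e. that the vanishing-flux boundary condition at $r=0$ is legitimate; everything else is routine integration. (Note also the harmless typographical point that the sentence "where $S^w_R$ is the geodesic $R$-sphere in $M^m_w$" in the statement is a leftover reference, since $S^w_R$ appears only implicitly through its volume.)
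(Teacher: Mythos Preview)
Your proof is correct and follows essentially the same line as the paper's, with two small presentational differences worth noting. First, rather than \emph{deriving} the formulas \eqref{eq_u1} and \eqref{eq_uk} by integrating the radial ODE as you do, the paper simply \emph{verifies} that the stated integral expressions satisfy $\Delta\tilde u_k=\tilde u_k''+(m-1)\eta_w\tilde u_k'=-k\tilde u_{k-1}$ together with $\tilde u_k(R)=0$, and then invokes the Maximum Principle to conclude uniqueness; this sidesteps your regularity discussion at $r=0$ at the cost of needing the formula in hand. Second, for \eqref{momentsW} the paper applies the Divergence Theorem to $\Delta\tilde u_{k+1}$ on $B^w_R$, obtaining $\int_{B^w_R}\tilde u_k\,dV=-\tfrac{1}{k+1}\int_{S^w_R}\langle\nabla\tilde u_{k+1},\nabla r\rangle\,dA=-\tfrac{1}{k+1}\tilde u_{k+1}'(R)\Vol(S^w_R)$, whereas you read the same identity directly from \eqref{eq_uk} at index $k+1$ in polar coordinates. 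The two routes are equivalent; yours is slightly more constructive, the paper's slightly cleaner at the center point.
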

\begin{proof}
Taking into account \eqref{eqWarpMean} and \eqref{eq_uk}, it is
easy to see that
\begin{equation}\label{laplautilde}
\Delta \tilde{u}_k=\tilde{u}_k''(r) + (m-1)\frac{w'(r)}{w(r)}
\tilde{u}_k'(r)=-k\,\tilde{u}_{k-1}(r).
\end{equation}
So, if
\begin{displaymath}
\tilde{u}_k(r)=k\,\int_r^R \frac{\int_0^t w^{m-1}(s)
\tilde{u}_{k-1}(s)\, ds}{w^{m-1}(t)}\,dt,
\end{displaymath}
the boundary condition $\tilde{u}_k(R)=0$ is satisfied and as a
consequence of the Maximum Principle for elliptic operators, the
functions $\tilde{u}_k$ are the only solutions to the boundary
value problems defined on $B^w_R$ and given by \eqref{eqmoments1}.

Therefore, applying the Divergence Theorem, we obtain

\begin{equation}\label{MomentDerivative}
\begin{aligned}
\widehat{\mathcal{A}}_k(B^w_R)\cdot\Vol(S^w_R)&=\int_{B^w_R} \tilde{u}_k\,
dV=-\frac{1}{k+1}\int_{B^w_R} \Delta \tilde{u}_{k+1}\,dV\\
&=-\frac{1}{k+1}\int_{S^w_R} \langle \nabla \tilde{u}_{k+1},
\nabla r \rangle\,dA=-\frac{1}{k+1}\, \tilde{u}_{k+1}'(R)\cdot\Vol(S^w_R) \quad ,
\end{aligned}
\end{equation}
and the claim is proved.
\end{proof}

%%%%%%%%%%%%%%%%%%%%%%%%%%%%%%%%%%%%%%%%%%%%%%%%%%%%%

\subsection{A key lemma}

%%%%%%%%%%%%%%%%%%%%%%%%%%%%%%%%%%%%%%%%%%%%%%%%%%%%%

Let us consider now an isoperimetric comparison model space  $M^m_W$  and let
$\tilde{u}^W_k$ be the radial functions given by \eqref{eq_uk},
which are the solutions of the problems \eqref{eqmoments1} defined
on the geodesic ball $B^W_{s(R)}$. We define the functions $f_k:
[\,0,R]\rightarrow \mathbb{R}$ as $f_k=\tilde{u}^W_k\circ s$, where
$s$ is the stretching function given by \eqref{eqstretching}.

Then we have the following lemma, which will be of instrumental importance for the proofs of the main results below:
\begin{lemma}\label{paren} 
Let $M^m_W$ be an  isoperimetric comparison model
space that is $w$-balanced  in the sense of Definition
\ref{defBalCond} with $h = h_{1}$ or $h = h_{2}$. Then for all $k\geq 1$,
\begin{displaymath}\label{eqParent}
f_k''(r)-f_k'(r)\eta_w(r)\geq 0 \quad.
\end{displaymath}
If $k \geq 2$ or  if $M^m_W$  is \emph{strictly} balanced, 
then the inequality is in fact a strict inequality:
\begin{displaymath} \label{eqParentSharp}
f_k''(r)-f_k'(r)\eta_w(r) > 0 \quad.
\end{displaymath}
\end{lemma}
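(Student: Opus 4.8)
The plan is to work entirely with the radial ODE satisfied by the functions $\tilde u^W_k$ on the model space $M^m_W$, transported back to the $r$-variable via the stretching function $s$, and to track the sign of the quantity $\Phi_k(r) := f_k''(r) - f_k'(r)\eta_w(r)$ by induction on $k$. First I would record the basic identities: since $f_k = \tilde u^W_k \circ s$ and $s'(r) = 1/g(r)$, the chain rule gives $f_k'(r) = (\tilde u^W_k)'(s(r))\,/\,g(r)$, and from \eqref{eq_uk} applied in $M^m_W$ with warping function $W$ one has $(\tilde u^W_k)'(s) = -k\,\Lambda(r(s))^{-1}\int_0^{s} W^{m-1}(t)\tilde u^W_{k-1}(t)\,dt$ after using $W^{m-1}(s) = \Lambda(r(s))$. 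Differentiating once more and simplifying with $W' / W = \eta_W$ should express $\Phi_k(r)$ in a clean form. The key observation I expect to need is that, up to a positive factor, $\Phi_k$ is governed by the difference between the isoperimetric quotient $q_W$ and $g/(m(\eta_w - h))$, which is exactly the content of the $w$-balance condition \eqref{eqBalA}.

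The induction base, $k = 1$: here $\tilde u^W_1$ is given by \eqref{eq_u1} in the $W$-model, so $f_1$ is explicit. I would compute $\Phi_1(r)$ directly. The expectation is that $\Phi_1(r)$ equals a strictly positive function times $\big(q_W(s(r))(\eta_w(r) - h(r)) - g(r)/m\big)$, which is $\geq 0$ by the $w$-balance condition and $> 0$ if $M^m_W$ is strictly $w$-balanced. This matches the ``if $M^m_W$ is strictly balanced'' clause of the statement. For the inductive step, assume $f_{k-1}'(r) \leq 0$ and the bound on $\Phi_{k-1}$; one first notes that $f_k' \leq 0$ as well (all the $\tilde u^W_k$ are nonnegative decreasing, directly from \eqref{eq_uk}), so that the terms involving $\eta_w - h > 0$ (see Remark \ref{remBalRef01}) and $f_k'$ contribute with a favorable sign. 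Then, writing $\Phi_k$ via the second derivative of the integral expression above and substituting the defining ODE \eqref{laplautilde} for $\tilde u^W_k$ in the $W$-model, the quantity $\Phi_k(r)$ should decompose into (a) a multiple of $-f_k'(r)\,(\eta_w(r)-h(r)) \cdot \big(q_W(\eta_w - h) - g/m\big)$-type term, again nonnegative by balance, plus (b) a multiple of $-f_{k-1}(r) \geq 0$ coming from the source term $k\,\tilde u^W_{k-1}$ in the hierarchy. Term (b) is where the strict inequality for $k \geq 2$ comes from: $\tilde u^W_{k-1}$ is strictly positive in the interior of the ball (it solves an equation with strictly positive right-hand side $k\,\tilde u^W_{k-2}$, $\tilde u^W_0 \equiv 1$, and vanishes only on the boundary, by the maximum principle), so this term is strictly positive even when the balance condition is non-strict.

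The main obstacle I anticipate is purely computational bookkeeping: carrying out the second differentiation of $f_k(r) = \tilde u^W_k(s(r))$ and reorganizing the result so that the $w$-balance quotient $q_W$ appears explicitly, while keeping careful track of which intermediate factors ($g$, $1/g$, $w$, $W$, $\Lambda$, $\eta_w$, $\eta_W$) are positive. In particular one must correctly relate $\eta_W(s)$ (the model's own mean curvature) to $\eta_w(r)$, $g(r)$, and $h(r)$ using the defining differential equation \eqref{eqLambdaDiffeq} for $\Lambda$; the identity $\eta_W(s(r)) = \tfrac{1}{m-1}\,\Lambda'(r)/\Lambda(r)\cdot g(r)$ together with \eqref{eqLambdaDiffeq} should give $W'(s)/W(s) = g(r)\,\eta_W(s(r))$ in terms of $\eta_w(r) - h(r)$ and $g(r)$, and getting this substitution exactly right is the crux. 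Once that bridge is in place, the inductive sign analysis is routine, and the two strictness refinements fall out as described.
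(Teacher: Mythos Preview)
Your overall strategy---chain rule to pass from $f_k$ to $\tilde u^W_k$, use the radial ODE \eqref{laplautilde}, express $(m-1)W'/W$ in terms of $\eta_w-h$ and $g$ via \eqref{eqLambdaDiffeq}, then invoke the $w$-balance condition---is exactly the paper's route. You correctly flag the identity relating $\eta_W$ to $\eta_w$, $g$, $h$ as the crux, and indeed the paper obtains $(m-1)W'(s)/W(s)=\tfrac{m}{g}(\eta_w-h)-g\,\eta_w-g'$, after which one arrives at the clean equality
\[
g^2(r)\bigl(f_k''(r)-f_k'(r)\eta_w(r)\bigr)=-k\,f_{k-1}(r)-m\,(\eta_w(r)-h(r))\,f_k'(r).
\]

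Where your outline goes wrong is in the sign analysis and the strictness mechanism. In the identity above the ``source'' term $-k\,f_{k-1}$ is \emph{negative}, not a nonnegative contribution (b); the whole point is that the second term must dominate it. The paper does this by estimating $-f_k'$ from below: since $f_{k-1}$ is decreasing, \eqref{eq_uk} gives $-f_k'(r)\geq k\,f_{k-1}(r)\,q_W(s(r))/g(r)$, and substituting yields $g^3\Phi_k\geq k\,f_{k-1}\bigl(m\,q_W(\eta_w-h)-g\bigr)\geq 0$ by balance. The strictness for $k\geq 2$ therefore does \emph{not} come from $f_{k-1}$ being strictly positive in the interior (that is equally true for $f_0\equiv 1$), but from $f_{k-1}$ being \emph{strictly decreasing} when $k-1\geq 1$, which makes the inequality $-f_k'\geq k\,f_{k-1}\,q_W/g$ strict; for $k=1$ it is an equality and one needs strict balance instead. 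Finally, no induction on $\Phi_{k-1}$ is needed or used---only the monotonicity of $f_{k-1}$ enters, and that follows directly from \eqref{eq_uk}.
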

\begin{proof}
By equation \eqref{eqstretching},
\begin{eqnarray}\label{fk''}
\nonumber
f''_k(r)&=&\tilde{u}^{W''}_k(s(r))(s'(r))^2+\tilde{u}^{W'}_k(s(r))
s''(r)\\
&=&\frac{1}{g^2(r)}(\tilde{u}^{W''}_k(s(r))-\tilde{u}^{W'}_k(s(r)) g'(r)).
\end{eqnarray}
Since the functions $\tilde{u}^W_k$ are the solution  of the
problems \eqref{eqmoments1} on $B^W_{s(R)}$, using equation
\eqref{laplautilde},
\begin{displaymath}
\tilde{u}^{W''}_k(s(r))=-k\,\tilde{u}^W_{k-1}(s(r))-(m-1)\frac{W'(s(r))}{W(s(r))}\,\tilde{u}^{W'}_{k}(s(r)).
\end{displaymath}
Taking into account the explicit construction of $M_{W}^{m}$, i.e. equations \eqref{defW} and
\eqref{eqLambdaDiffeq}, a straightforward computation shows that
\begin{displaymath}
(m-1)\frac{W'(s(r))}{W(s(r))}=\frac{m}{g(r)}(\eta_{w}(r)-h(r))-g(r)\eta_w(r)-g'(r),
\end{displaymath}
and consequently,
\begin{equation*}
\tilde{u}^{W''}_k(s(r))=-k\,\tilde{u}^W_{k-1}(s(r))-\frac{m}{g(r)}(\eta_{w}(r)-h(r))\,\tilde{u}^{W'}_{k}(s(r))+
(\eta_w(r)g(r)+g'(r))\,\tilde{u}^{W'}_{k}(s(r)).
\end{equation*}
 Replacing the expression of $\tilde{u}^{W"}_k(s(r))$ in equation
 \eqref{fk''} we obtain that
\begin{displaymath}
g^2(r)\,f_k''(r)=-k f_{k-1}(r)+(g^2(r)\eta_w(r)-m
(\eta_w(r)-h(r)))f_k'(r),
\end{displaymath}
and
\begin{equation}\label{parentesis}
g^2(r)(f_k''(r)-f_k'(r)\eta_w(r))=-k f_{k-1}(r)-m
(\eta_w(r)-h(r))f_k'(r).
\end{equation}
Since $f'_k(r)=\tilde{u}^{W'}_k(s(r))/g(r)< 0$, the functions $f_k$
are strictly decreasing in $]\,0,R\,]$ for all $k\geq 1$ and
consequently by \eqref{eq_uk}
\begin{eqnarray}
f'_k(r)&=& -k\,\frac{\int_0^{s(r)} W^{m-1}(s) \tilde{u}_{k-1}(s)\,
ds}{W^{m-1}(s(r))g(r)}=-k\,\frac{\int_0^r
\frac{\Lambda(t)}{g(t)}f_{k-1}(t)\, dt}{\Lambda(r)g(r)}\\
&\leq (<) & -k f_{k-1}(r)\frac{\int_0^r \frac{\Lambda(t)}{g(t)}\,
dt}{\Lambda(r)g(r)}=-k f_{k-1}(r)q_W(s(r))/g(r),\label{eqkineq}
\end{eqnarray}
where the last equality is obtained using equation
\eqref{eqIsopFunction}. Note that we can assume that
$\tilde{u}_0\equiv 1$ and therefore $f_0\equiv 1$ too, so that only in the
case $k=1$ can we have equality in (\ref{eqkineq}).

Finally, combining the above inequality with equation
\eqref{parentesis} we get:
\begin{displaymath}
g^3(r)(f_k''(r)-f_k'(r)\eta_w(r))\, \geq (>) \, k f_{k-1}(r)\left(-g(r)+m\,
q_W(s(r))(\eta_w(r)-h(r))\right)\geq (>) \, 0 
\end{displaymath}
by the balance condition \eqref{eqBalA} -- respectively the strict balance condition --  and the fact that $g$ and
$f_{k-1}$ are positive functions.
\end{proof}

%%%%%%%%%%%%%%%%%%%%%%%%%%%%%%%%%%%%%%%%%%%
%%%%%%%%%%%%%%%%%%%%%%%%%%%%%%%%%%%%%%%%%%%

\section{Lower and Upper bounds for the isoperimetric exit moments}\label{MainSect}

%%%%%%%%%%%%%%%%%%%%%%%%%%%%%%%%%%%%%%%%%%%
%%%%%%%%%%%%%%%%%%%%%%%%%%%%%%%%%%%%%%%%%%%

We are now ready to prove the first of our main results.

\begin{theorem} \label{thm2.1}
Let $\{ N^{n}, P^{m}, C_{w,g,h_{1}}^{m} \}$ denote a comparison
constellation boun\-ded from below in the sense of Definition
\ref{defConstellatNew1}. Assume that $M^m_W = C_{w,g,h}^{m}$ is
$w$-balanced in the sense of Definition \ref{defBalCond}. Let $D_R$ be an
extrinsic $R$-ball in $P^m$,
 with center at a point $p \in P$ which also serves as a pole in $N$.  According to remark \ref{remTrivTop}, our assumption $g(r(x)) > 0$ implies trivial topology of the extrinsic ball $D_R$. For all $k\geq 0$, i.e. for the extended exit moment spectrum, we also have:
\begin{equation} \label{eqMain}
\widehat{\mathcal{A}}_k(D_R) \geq
\widehat{\mathcal{A}}_k(B^{W}_{s(R)})\quad ,
\end{equation}
where $B^{W}_{s(R)}$ is the geodesic $s(R)$-ball in $C_{w,g,h_{1}}^{m}$.
\end{theorem}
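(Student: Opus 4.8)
The plan is to compare the exit moment functions $u_k$ on the extrinsic ball $D_R$ with the transplanted model functions $f_k = \tilde u^W_k \circ s$ defined on $[0,R]$, and then to integrate the resulting pointwise comparison against the appropriate volume elements. First I would set up the comparison function $F_k := f_k \circ r : D_R \to \mathbb{R}$, which vanishes on $\partial D_R$ since $f_k(R) = \tilde u^W_k(s(R)) = 0$. The key input is the Laplacian comparison Theorem \ref{corLapComp}(i): since $\{N^n, P^m, C^m_{w,g,h_1}\}$ is a comparison constellation bounded from below, the $p$-radial sectional curvatures of $N$ dominate $-w''/w$, and since each $f_k$ is (weakly) decreasing with $f_k'(r) \le 0$, we may apply the inequality \eqref{eqLap1}. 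Plugging in $\Vert \nabla^P r\Vert^2 \ge g^2(r)$ (radial tangency) and $\langle \nabla^N r, H_P\rangle = -\mathcal{C}(x) \le -h_1(r)$ (radial mean convexity from below), together with the sign of the parenthetical term $f_k'' - f_k'\eta_w \ge 0$ furnished by Lemma \ref{paren}, one should arrive at a differential inequality of the form $\Delta^P F_k \le -k\, F_{k-1}$ on $D_R$ (after using the model-space recursion \eqref{laplautilde} rewritten via the $C^m_{w,g,h_1}$ construction to identify $f_k'' + \text{(model terms)}$ with $-k f_{k-1}$).

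With the subharmonicity-type estimate $\Delta^P F_k + k\,F_{k-1} \le 0$ in hand, the second step is an inductive comparison $u_k \ge F_k$ on $D_R$. The base case $k=0$ is the identity $u_0 = F_0 = 1$; for $k=1$ one has $\Delta^P(u_1 - F_1) \ge 0$ with $u_1 - F_1 = 0$ on $\partial D_R$, so the maximum principle gives $u_1 \ge F_1$. For the inductive step, from $\Delta^P u_k = -k u_{k-1} \le -k F_{k-1} \le \Delta^P F_k$ (using $u_{k-1} \ge F_{k-1}$ and $F_{k-1} \ge 0$) and the shared vanishing boundary data, the maximum principle again yields $u_k \ge F_k$ on all of $D_R$. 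This is the step that ties everything together and where the nonnegativity of $F_{k-1}$ and the correct direction of the curvature hypothesis both get used.

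The final step is to convert the pointwise inequality $u_k \ge F_k = f_k \circ r$ into the stated inequality for $\widehat{\mathcal{A}}_k$. Here I would integrate $u_k$ over $D_R$ using the coarea formula with respect to $r$, compare $\int_{D_R} u_k\, dV$ against the corresponding model integral $\int_{B^W_{s(R)}} \tilde u^W_k\, dV$ via the isoperimetric/volume comparison for extrinsic balls already established in the cited works \cite{HMP, MP5} (which controls $\Vol(\partial D_t)$ from below by the model sphere volume, using again tangency and mean convexity), and finally divide by $\Vol(\partial D_R)$. Alternatively, and perhaps more cleanly, one can express $\widehat{\mathcal{A}}_k(D_R) = \frac{1}{\Vol(\partial D_R)}\int_{D_R} u_k\, dV$ and use the divergence-theorem identity $\int_{D_R} u_k\, dV = -\frac{1}{k+1}\int_{\partial D_R}\langle \nabla^P u_{k+1}, \nu\rangle\, dA$ together with a boundary gradient comparison $-\langle \nabla^P u_{k+1}, \nu\rangle \ge -f_{k+1}'(R)$ obtained from the maximum principle applied to $u_{k+1} - F_{k+1}$ on $D_R$, paralleling the computation in \eqref{MomentDerivative}; this reduces the claim to $-\frac{1}{k+1} f_{k+1}'(R) \ge \widehat{\mathcal{A}}_k(B^W_{s(R)})$, which holds with equality by Proposition \ref{propW1} applied to $M^m_W$.

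\textbf{Main obstacle.} The technical heart is the first step: correctly propagating the curvature, tangency, and mean-convexity bounds through the Laplacian comparison \eqref{eqLap1} while keeping track of the sign of $\Vert\nabla^P r\Vert^2$ as a coefficient of the term $f_k'' - f_k'\eta_w$ — it is precisely because Lemma \ref{paren} guarantees $f_k'' - f_k'\eta_w \ge 0$ that the lower bound $\Vert\nabla^P r\Vert^2 \ge g^2(r)$ (rather than the trivial upper bound $\le 1$) can be inserted in the right direction. Getting this bookkeeping exactly right, and confirming that the $C^m_{w,g,h_1}$-construction equations \eqref{defW}–\eqref{eqLambdaDiffeq} make the leftover model terms collapse to exactly $-k f_{k-1}(r)$, is the delicate part; the maximum-principle induction and the final integration are then routine.
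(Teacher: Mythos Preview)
Your overall architecture is exactly that of the paper: transplant the model solutions as $F_k = f_k\circ r$, derive a Laplacian inequality from Theorem~\ref{corLapComp}(i) together with Lemma~\ref{paren}, run an induction via the maximum principle to compare $u_k$ with $F_k$, and finish with the divergence theorem and Proposition~\ref{propW1}. However, you have reversed the direction of the key differential inequality. With the $p$-radial sectional curvatures bounded from \emph{below} and $f_k'\le 0$, inequality~\eqref{eqLap1} gives $\Delta^P F_k \ge (\cdots)$, not $\le$; after inserting $\Vert\nabla^P r\Vert^2 \ge g^2(r)$ (legitimate precisely because Lemma~\ref{paren} makes the coefficient $f_k''-f_k'\eta_w$ nonnegative) and $\mathcal{C}(x)\ge h_1(r)$, equation~\eqref{parentesis} collapses the right-hand side to $-k\,f_{k-1}(r)$, so the correct conclusion is $\Delta^P F_k \ge -k\,F_{k-1}$. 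Consequently $\Delta^P(u_1-F_1)\le 0$ (superharmonic, not subharmonic) with zero boundary data, and it is the \emph{minimum} principle that yields $u_1\ge F_1$; as you wrote it, $\Delta^P(u_1-F_1)\ge 0$ would force $u_1\le F_1$. Two sign flips happen to cancel in your final pointwise conclusion, but the intermediate statements are wrong.

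In the last step the paper also proceeds slightly differently from your Hopf-type boundary-gradient alternative: instead of comparing $\partial_\nu u_{k+1}$ with $\partial_\nu F_{k+1}$, it uses the already established $\Delta^P u_{k+1}\le \Delta^P F_{k+1}$ to pass from $-\tfrac{1}{k+1}\int_{D_R}\Delta^P u_{k+1}$ to $-\tfrac{1}{k+1}\int_{D_R}\Delta^P F_{k+1}$, then applies the divergence theorem to $F_{k+1}$, obtaining $-\tfrac{1}{k+1}f_{k+1}'(R)\int_{\partial D_R}\Vert\nabla^P r\Vert\,dA$. Note the factor $\Vert\nabla^P r\Vert$ on the boundary, which you dropped in writing $-\langle\nabla^P u_{k+1},\nu\rangle\ge -f_{k+1}'(R)$: what actually follows from $u_{k+1}\ge F_{k+1}$ with equality on $\partial D_R$ is $-\partial_\nu u_{k+1}\ge -f_{k+1}'(R)\,\Vert\nabla^P r\Vert$. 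Only after bounding $\Vert\nabla^P r\Vert\ge g(R)$ does the $g(R)$ cancel against $f_{k+1}'(R)=\tilde u^{W\prime}_{k+1}(s(R))/g(R)$ to give exactly $\widehat{\mathcal A}_k(B^W_{s(R)})$; your ``holds with equality by Proposition~\ref{propW1}'' is therefore not quite right, since $-\tfrac{1}{k+1}f_{k+1}'(R)=\widehat{\mathcal A}_k(B^W_{s(R)})/g(R)$ in general.
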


\begin{proof}

Consider the functions $f_k=\tilde{u}^W_k\circ s$ of Lemma
\ref{paren}. Let $r$ denote the smooth distance to the pole $p$ on
$M$. We define $v_k:D_R\rightarrow \mathbb{R}$ by
$v_k(q)=f_k(r(q))$.

Using Theorem \ref{corLapComp}, Lemma \ref{paren}, equation
\eqref{parentesis} and the fact that $f'_k(r)\leq 0$, we have that
\begin{eqnarray}\label{laplaineqv_k}
\Delta^P v_k = \Delta^P(f_k\circ r)&\geq&
(f_k''(r)-f_k'(r)\eta_w(r))\|\nabla^P r\|^2+
m\,f_k'(r)(\eta_w(r)-h_{1}(r))\\
&\geq&
(f_k''(r)-f_k'(r)\eta_w(r))\cdot g^{2}(r)+
m\,f_k'(r)(\eta_w(r)-h_{1}(r))\\
\nonumber &=&-k f_{k-1}(r)=-k\, v_{k-1}, \quad \text{on}\,\,D_R
\end{eqnarray}

 Now, we are going to prove {\emph{inductively}} that if we denote by
 $u_k$ the solutions of the hierarchy of boundary value problems on
 $D_R$ given by \eqref{eqmoments1}, then $v_k\leq u_k$ on $D_R$.

For $k=1$, since $f_0$ is assumed to be identically $1$,
inequality \eqref{laplaineqv_k} gives us that
\begin{displaymath}
\Delta^P v_1 \geq -1=\Delta^P u_1,
\end{displaymath}
so $\Delta^P (v_1 - u_1)\geq 0$ on $D_R$ and $(v_1-u_1)=0$ on
$\partial D_R$. Applying the Maximum Principle we conclude that
$v_1\leq u_1$ on $D_R$.

Suppose now that $v_k \leq u_k$ on $D_R$, then as a consequence of
inequality \eqref{laplaineqv_k} we get
\begin{displaymath}
\Delta^P v_{k+1}\geq -(k+1)\,v_k \geq -(k+1)\,u_k=\Delta^P
u_{k+1},
\end{displaymath}
and $(v_{k+1}-u_{k+1})=0$ on $\partial D_R$, so applying again the
Maximum Principle we have $v_{k+1}\leq u_{k+1}$.

Summarizing we have so far:  $v_k\leq u_k$ and $\Delta^P v_k \geq \Delta^P u_k$ on
$D_R$ for all $k\geq 1$. Taking  these inequalities into account and applying Divergence theorem we
then get

\begin{eqnarray*}
\widehat{\mathcal{A}}_k(D_R)\cdot\Vol(\partial D_{R})&=&\int_{D_R} u_k d\,V =-\frac{1}{k+1}\int_{D_R}
\Delta^P u_{k+1} d\,V \\
&\geq& -\frac{1}{k+1}\int_{D_R} \Delta^P v_{k+1}
d\,V=-\frac{1}{k+1}\int_{\partial D_R} \langle\nabla^P v_{k+1},
\frac{\nabla^P r}{\|\nabla^P r\|}\rangle
d\,A\\
&=&-\frac{1}{k+1}f_{k+1}'(R)\int_{\partial D_R} \|\nabla^P r\|
d\,A.
\end{eqnarray*}
Since $f_{k+1}'(R)=\tilde{u}^{W'}_{k+1}(s(R))/g(R) \leq 0$ and
$\|\nabla^P r\| \geq g(r)$, we conclude that
\begin{eqnarray*}
\widehat{\mathcal{A}}_k(D_R) \geq
-\frac{1}{k+1}\,\frac{\tilde{u}^{W'}_{k+1}(s(R))}{g(R)}\,g(R)\,
=\widehat{\mathcal{A}}_k(B^W_{s(R)}),
\end{eqnarray*}
by equation \eqref{momentsW}. And this proves the claim in (\ref{eqMain}).
\end{proof}

\begin{theorem} \label{thm2.2}
Let $\{ N^{n}, P^{m}, C_{w,1,h_{2}}^{m} \}$  denote a comparison
constellation boun\-ded from above. Assume that
$M^m_W=C_{w,1,h_{2}}^{m}$ is $w$-balanced in the sense of Definition \ref{defBalCond}. Let $D_R$ be a
smooth precompact extrinsic $R$-ball in $P^m$
 with center at a point $p \in P$ which also serves as a pole in $N$.  Then, for all $k\geq 0$, i.e. for the extended isoperimetric exit moment spectrum we have:
\begin{equation} \label{eqMain3}
\widehat{\mathcal{A}}_k(D_R) \leq
\widehat{\mathcal{A}}_k(B^{W}_{R}) \quad ,
\end{equation}
where $B^{W}_{R}$ is the geodesic ball in $C_{w,1,h_{2}}^{m}$.

If $M_{W}^{m}$ is strictly balanced then equality in (\ref{eqMain3}) for some fixed radius $R$ and some fixed $k \geq 0$ implies that
$D_R$ is a geodesic cone in $N$ and that the equality is in fact attained for all $k \geq 0$ and for every smaller $p$-centered extrinsic ball in $P^{m}$.
\end{theorem}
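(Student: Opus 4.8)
The plan is to mirror the proof of Theorem \ref{thm2.1}, reversing every inequality using the ``bounded from above'' hypotheses, and then to analyze the equality case by tracking where strict inequalities occur. First I would set $f_k = \tilde u^W_k \circ s$ with $s$ the stretching function for $g = 1$ (so $s(r) = r$ and $B^W_{s(R)} = B^W_R$), and define $v_k(q) = f_k(r(q))$ on $D_R$. Since we are in the ``from above'' setting, Theorem \ref{corLapComp}(ii) applies with $f' = f'_k \leq 0$, giving $\Delta^P v_k \leq (f_k''(r) - f_k'(r)\eta_w(r))\|\nabla^P r\|^2 + m f_k'(r)(\eta_w(r) + \langle \nabla^N r, H_P\rangle)$. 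Using $\mathcal{C}(x) \leq h_2(r)$, i.e. $-\langle \nabla^N r, H_P\rangle \leq h_2$, hence $\langle \nabla^N r, H_P\rangle \geq -h_2$, together with $f_k'(r) \leq 0$, the second term is bounded above by $m f_k'(r)(\eta_w(r) - h_2(r))$. Then by Lemma \ref{paren} (with $h = h_2$) the bracket $f_k'' - f_k'\eta_w \geq 0$, and since $g = 1$ we have $\|\nabla^P r\|^2 \leq 1$, so $(f_k'' - f_k'\eta_w)\|\nabla^P r\|^2 \leq f_k'' - f_k'\eta_w$; combining with \eqref{parentesis} (at $g \equiv 1$) yields $\Delta^P v_k \leq -k f_{k-1}(r) = -k v_{k-1}$ on $D_R$.

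Next I would run the same induction as in Theorem \ref{thm2.1}, but reversed: for $k = 1$, $\Delta^P v_1 \leq -1 = \Delta^P u_1$, so $\Delta^P(v_1 - u_1) \leq 0$ with $v_1 - u_1 = 0$ on $\partial D_R$, whence $v_1 \geq u_1$ by the Maximum Principle; and if $v_k \geq u_k$ then $\Delta^P v_{k+1} \leq -(k+1)v_k \leq -(k+1)u_k = \Delta^P u_{k+1}$, giving $v_{k+1} \geq u_{k+1}$. So $v_k \geq u_k$ and $\Delta^P v_k \leq \Delta^P u_k$ on $D_R$ for all $k \geq 1$. Applying the Divergence Theorem exactly as before (now with $g \equiv 1$, so $\|\nabla^P r\| \leq 1$ and $f_{k+1}'(R) \leq 0$ force the inequality the other way),
\[
\widehat{\mathcal{A}}_k(D_R)\cdot \Vol(\partial D_R) = -\frac{1}{k+1}\int_{D_R}\Delta^P u_{k+1}\,dV \leq -\frac{1}{k+1}\int_{\partial D_R} f_{k+1}'(R)\|\nabla^P r\|\,dA \leq -\frac{1}{k+1}f_{k+1}'(R)\Vol(\partial D_R),
\]
which by \eqref{momentsW} is $\widehat{\mathcal{A}}_k(B^W_R)\cdot\Vol(\partial D_R)$, proving \eqref{eqMain3}. (The case $k = 0$ is the isoperimetric statement \eqref{eqA0}-type inequality, handled separately or by the same Divergence argument with $u_0 = 1$.)

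For the equality discussion, suppose $M^m_W$ is strictly balanced and $\widehat{\mathcal{A}}_k(D_R) = \widehat{\mathcal{A}}_k(B^W_R)$ for some $k \geq 0$. Then every inequality in the chain above must be an equality. The strict balance hypothesis makes Lemma \ref{paren} give $f_j'' - f_j'\eta_w > 0$ strictly for all $j \geq 1$; therefore equality in $(f_{k+1}'' - f_{k+1}'\eta_w)\|\nabla^P r\|^2 = f_{k+1}'' - f_{k+1}'\eta_w$ (needed on a set of positive measure, via the strong maximum principle applied to $v_{k+1} - u_{k+1}$, which forces $v_{k+1} \equiv u_{k+1}$ and hence $\Delta^P v_{k+1} = \Delta^P u_{k+1}$ throughout $D_R$) forces $\|\nabla^P r\| \equiv 1$ on $D_R$. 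Likewise equality in the term involving $\langle \nabla^N r, H_P\rangle$ forces $\langle \nabla^N r, H_P\rangle = -h_2(r)$, i.e. $\mathcal{C} \equiv h_2$. The condition $\|\nabla^P r\| \equiv 1$ means the extrinsic distance agrees with the intrinsic one and that $\nabla^N r$ is everywhere tangent to $P$, so the radial geodesics of $N$ issuing from $p$ that meet $D_R$ lie entirely in $P$; this is precisely the statement that $D_R$ is a geodesic cone in $N$ (swept out by ambient radial geodesics). Propagating the equalities back down the induction (each $v_j \equiv u_j$ forces the previous Maximum Principle comparison to be an equality as well) and observing that the cone structure is a condition on germs of radial geodesics through $p$, the same identities hold on every smaller $p$-centered extrinsic ball, so equality is attained for all $k \geq 0$ and all smaller radii.

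The main obstacle I anticipate is the equality analysis, specifically extracting $\|\nabla^P r\| \equiv 1$ cleanly: one must argue that equality of the two integrals forces pointwise equality $v_k = u_k$ (via the strong maximum principle, using that $v_k - u_k$ is a nonnegative superharmonic-type function vanishing on the boundary), then that $\Delta^P v_k = \Delta^P u_k = -k u_{k-1}$ combined with the strict positivity of $f_{k+1}'' - f_{k+1}'\eta_w$ from strict balance forces the tangency $\|\nabla^P r\| = 1$ and the mean-curvature equality simultaneously — these are the two ``slack'' terms in the Laplacian comparison, and both must vanish. One also needs the base case $k = 0$ treated by hand (it does not go through Lemma \ref{paren} but follows directly from the isoperimetric inequality for geodesic cones), and care that $f_{k+1}'(R) \neq 0$ so that dividing out $\Vol(\partial D_R)$ and identifying the cone are legitimate.
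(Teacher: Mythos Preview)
Your proposal is correct and follows essentially the same approach as the paper's proof: reduce to $s(r)=r$ since $g\equiv 1$, reverse the Laplacian comparison and the induction via the Maximum Principle to obtain $v_k \geq u_k$ and $\Delta^P v_k \leq \Delta^P u_k$, then apply the Divergence Theorem. Your equality analysis is in fact somewhat more detailed than the paper's (which simply asserts that equality in the integral chain forces pointwise equality in the step $(f_k'' - f_k'\eta_w)\|\nabla^P r\|^2 \leq f_k'' - f_k'\eta_w$, and then uses the strict positivity from Lemma~\ref{paren} under strict balance to conclude $\|\nabla^P r\|\equiv 1$); your invocation of the strong Maximum Principle to pass from integral to pointwise equality is the right way to make that step rigorous.
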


\begin{proof}
The proof of this theorem follows closely the lines of the proof of Theorem
\ref{thm2.1}. Since there are, however, some crucial and obvious differences we take this space to point them out explicitly.
In the present case we have  $s(r)=r$ because $g(r) \equiv 1$ (see equation \eqref{eqstretching}). Therefore
$f_{k+1}=\tilde{u}^W_{k+1}$ so that $v_{k+1}= \tilde{u}^W_{k+1}\circ
r$. Thence $v_{k+1}$ is the solution of the boundary
value problems \eqref{eqmoments1} on $B^W_R$  transplanted to
$D_R$.

The new geometric setting given by the {\em comparison
constellation bounded from above} gives us now:
\begin{eqnarray}
\Delta^P v_k = \Delta^P(f_k\circ r)&\leq&
(f_k''(r)-f_k'(r)\eta_w(r))\|\nabla^P r\|^2+
m\,f_k'(r)(\eta_w(r)-h_{2}(r))   \label{eqAbov4} \\
&\leq&
(f_k''(r)-f_k'(r)\eta_w(r)) +
m\,f_k'(r)(\eta_w(r)-h_{2}(r))   \label{eqAbov3}  \\
\nonumber &=&-k f_{k-1}(r)=-k\, v_{k-1}, \quad \text{on}\,\,D_R \quad.
\end{eqnarray}
Again we prove {\emph{inductively}} that if
 $u_k$ denotes the family of solutions of the hierarchy of boundary value problems on
 $D_R$ given by \eqref{eqmoments1}, then $v_k\geq u_k$ on $D_R$.

For $k=1$, since $f_0$ is still assumed to be identically $1$,
inequalities \eqref{eqAbov3} and \eqref{eqAbov4} give us that
\begin{displaymath}
\Delta^P v_1 \leq -1=\Delta^P u_1,
\end{displaymath}
so $\Delta^P (v_1 - u_1)\leq 0$ on $D_R$ and $(v_1-u_1)=0$ on
$\partial D_R$. Applying the Maximum Principle we conclude that
$v_1\geq u_1$ on $D_R$.

Suppose now that $v_k \geq u_k$ on $D_R$, then again as a consequence of
inequalities \eqref{eqAbov4} and \eqref{eqAbov3} we get
\begin{displaymath}
\Delta^P v_{k+1}\leq -(k+1)\,v_k \leq -(k+1)\,u_k=\Delta^P
u_{k+1},
\end{displaymath}
and $(v_{k+1}-u_{k+1})=0$ on $\partial D_R$, so applying again the
Maximum Principle we have $v_{k+1}\geq u_{k+1}$.

We have:  $v_k\geq u_k$ and $\Delta^P v_k \leq \Delta^P u_k$ on
$D_R$ for all $k\geq 1$. The Divergence theorem gives the claim in (\ref{eqMain3}):
\begin{eqnarray}
\nonumber\widehat{\mathcal{A}}_k(D_R)\cdot\Vol(\partial D_{R})&=&\int_{D_R} u_k d\,V =-\frac{1}{k+1}\int_{D_R}
\Delta^P u_{k+1} d\,V \\
&\leq& -\frac{1}{k+1}\int_{D_R} \Delta^P v_{k+1} d\,V \label{eqAbov01}\\
\nonumber&=&-\frac{1}{k+1}f_{k+1}'(R)\int_{\partial D_R} \|\nabla^P r\|
d\,A\\
&\leq & \widehat{\mathcal{A}}_k(B^W_{R})\cdot\Vol(\partial D_{R})\quad . \label{eqAbov02}
\end{eqnarray}
Suppose that $M_{W}^{m}$ is strictly balanced and that we have equality in (\ref{eqMain3}). Then we must have equalities in \eqref{eqAbov02}, \eqref{eqAbov01}, and \eqref{eqAbov3} as well.
In particular the last mentioned equality gives $\Vert \nabla^P r\Vert \equiv 1$ because we have from \eqref{eqParentSharp} that $(f_k''(r)-f_k'(r)\eta_w(r)) > 0$. Therefore $\nabla^{P} r = \nabla^{N} r$ and $D_{R}$ is a geodesic cone swept out by the radial geodesics from $p$. 
\end{proof}

%%%%%%%%%%%%%%%%%%%%%%%%%%%%%%%%%%%%
\section{Intrinsic and constant curvature results} \label{secIntrinsic}
%%%%%%%%%%%%%%%%%%%%%%%%%%%%%%%%%%%%
In this short section we finally
show how to obtain the results stated in the introduction from Theorem \ref{thm2.1} and Theorem \ref{thm2.2}.

\begin{proof}[Proof of Theorem \ref{thmExFirst}]
This theorem follows immediately from Theorem \ref{thm2.2} once we show that the comparison space  $M_{W}^{m}$ is strictly $w$-balanced. But  we have $g=1$ and $h_{2}=0$ so that $M_{W}^{m}$ is  $M_{w}^{m} = \mathbb{H}^{m}(b)$, $b < 0$, which is strictly $w$-balanced according  to remark \ref{remBalanceRef}. The equality case  gives even more significant rigidity: Since $D_{R}$ is here a \emph{minimal} geodesic cone, then  by analytic continuation $D_{R}$ and in fact all of $P^{m}$ is totally geodesic in the hyperbolic space $\mathbb{H}^{n}(b)$, see \cite{Ma1}.
\end{proof}

\begin{proof}[Proof of Theorem \ref{thm2.1intrinsic}]
We consider the intrinsic versions of (the proofs of) Theorem \ref{thm2.1} and Theorem \ref{thm2.2}assuming that $P^m=N^n$. In this case, the extrinsic
distance to the pole $p$ becomes the intrinsic distance in $N$,
so, the extrinsic domains $D_R$ become the geodesic
balls $B^N_R$ of the ambient manifold $N$ and for all $x \in P$ we have:
\begin{eqnarray*}
\nabla^P r(x)&=&\nabla r (x),\\
H_P(x)&=&0.
\end{eqnarray*}
As a consequence, $\Vert\nabla^P r\Vert \equiv 1$, so $g(r(x))=1$ and
$\mathcal{C}(x)=h_{1}(r(x))=h_{2}(r(x)) = 0$. The stretching function becomes the
identity $s(r)=r$, $W(s(r))=w(r)$, and the isoperimetric
comparison spaces $C_{w,g,h_{1}}^m$ and  $C_{w,1,h_{2}}^m$reduce to the same auxiliary model
space $M^m_w$. Since $\|\nabla r\| \equiv 1$, we do not need to control
the sign of $(f_k''(r)-f_k'(r)\eta_w(r))$ in equations
\eqref{eqLap1} and \eqref{eqLap2}. For this reason it is not
necessary to assume any $w$-balance conditions in these cases.
The theorem and the two-sided bounds in (\ref{eqMain6}) then follow directly from the inequalities in Theorem \ref{thm2.1} and Theorem \ref{thm2.2}.
If equality is satisfied, then $B^N_R$ has all its radial curvatures equal to the radial curvatures of $M^m_w$, hence they are isometric, see \cite{MP4}.
\end{proof}

\end{document}